\DeclareMathOperator\Gal{Gal}
\definecolor{mygray}{gray}{0.6}
\newcommand{\authnote}[2][]{\noindent {\if!#1!  {\bf TODO} \else {\small \bf #1} \fi: #2} \vspace{0.1in}}
\def\1{\mathbb{1}}
\def\cE{\mathcal{E}}
\def\cO{\mathcal{O}}
\def\cZ{\mathcal{Z}}
\def\cK{\mathcal{K}}
\def\A{\mathbb{A}}
\def\Z{\mathbb{Z}}
\def\Q{\mathbb{Q}}
\def\R{\mathbb{R}}
\def\C{\mathbb{C}}
\def\T{\mathbf{T}}
\def\U{\mathbf{U}}
\def\G{\mathbf{G}}
\def\H{\mathbf{H}}
\def\Res{\text{Res}}
\def\ab{\text{ab}}
\def\Gal{\text{Gal}}
\DeclareFontFamily{OT1}{rsfs}{}
\DeclareFontShape{OT1}{rsfs}{n}{it}{<-> rsfs10}{}
\DeclareMathAlphabet{\mathscr}{OT1}{rsfs}{n}{it}
\newcommand{\cN}{\mathcal N}
\newcommand{\cA}{\mathcal{A}}
\newcommand{\cI}{\mathcal{I}}
\newcommand{\cV}{\mathcal{V}}
\newcommand{\ds}{\displaystyle}
\newcommand{\cB}{\mathcal{B}}
\newcommand{\ra}{\rightarrow}
\newcommand{\mthree}[9]{\left [
        \begin{matrix}#1&#2&#3\\#4&#5&#6\\#7&#8&#9
        \end{matrix}\right ]}
\newcommand{\comment}[1]{}
\newcommand{\cS}{\mathcal{S}}
\newcommand{\cU}{\mathcal{U}}
\renewcommand{\H}{\mathbf{H}}
\DeclareMathOperator{\dist}{dist}
\DeclareMathOperator{\Tr}{Tr}
\DeclareMathOperator{\GL}{\mathbf{GL}}
\DeclareMathOperator{\Hbf}{\mathbf{H}}
\DeclareMathOperator{\Sh}{Sh}
\DeclareMathOperator{\End}{End}
\DeclareMathOperator{\inv}{\mathbf{inv}}
\DeclareMathOperator{\pr}{pr}
\DeclareMathOperator{\Art}{Art}
\DeclareMathOperator{\Hyp}{\mathbf{Hyp}}
\DeclareMathOperator{\Frac}{Frac}
\theoremstyle{plain} 
\newtheorem{thm}{Theorem}[section] 
\newtheorem{prop}[thm]{Proposition}
\newtheorem{cor}[thm]{Corollary}
\newtheorem{lem}[thm]{Lemma}
\theoremstyle{definition} 
\newtheorem{assumptions}[thm]{Assumptions}
\theoremstyle{remark} 
\newtheorem{rem}{Remark}
\newcounter{tasknumber}
\newcommand{\task}[2][]{%
  \addtocounter{tasknumber}{1}%
  \begin{center}%
  \framebox[1.1\width]{\begin{minipage}{0.9\textwidth}%
  \textbf{Task \arabic{tasknumber}} \textit{\if!#1(unassigned)!\else (#1)\fi}: {#2}%
  \end{minipage}}%
  \end{center}%
}
\newcounter{assumptionnumber}
\newcommand{\assumption}[2][]{%
  \addtocounter{assumptionnumber}{1}%
  \begin{center}%
  \framebox[1.1\width]{\begin{minipage}{0.9\textwidth}%
  \textbf{Assumption \arabic{assumptionnumber}} \textit{\if!#1!\else (#1)\fi}: {#2}%
  \end{minipage}}%
  \end{center}%
}
\title[Vertical Distribution Relations for Unitary Shimura Varieties]{Vertical Distribution Relations for Special Cycles on Unitary Shimura Varieties}
\author{R\'eda Boumasmoud}
\email{reda.boumasmoud@epfl.ch}
\address{Ecole Polytechnique F\'ed\'erale de Lausanne, Switzerland}
\author{Ernest Hunter Brooks}
\email{ernest.brooks@epfl.ch}
\address{Ecole Polytechnique F\'ed\'erale de Lausanne, Switzerland}
\author{Dimitar Jetchev}
\email{dimitar.jetchev@epfl.ch}
\address{Ecole Polytechnique F\'ed\'erale de Lausanne, Switzerland}
\begin{document}
\maketitle

\begin{abstract}
We consider cycles on a 3-dimensional Shimura varieties attached to a unitary group, defined over extensions of a CM field $E$, which appear in the context of the conjectures of Gan, Gross, and Prasad \cite{gan-gross-prasad}. We establish a vertical distribution relation for these cycles over an anticyclotomic extension of $E$, complementing the horizontal distribution relation of \cite{jetchev:unitary}, and use this to define a family of norm-compatible cycles over these fields, thus obtaining a universal norm construction similar to the Heegner $\Lambda$-module constructed from Heegner points.
\end{abstract}

\section{Introduction}\label{sec:intro}
Let $\cK/\Q$ be an imaginary quadratic field with ring of integers $\cO_K$ and let $\cN$ be an ideal of $\cO_{\cK}$ of norm $N$. If $m$ is prime to $N$, the isogeny $\C/\cO_{m} \to \C/(\cN \cap \cO_{m})^{-1}$ corresponds to a Heegner point $x_{m}$ in $X_0(N)(\cK[m])$, where $\cK[m]$ denotes the ring class field of conductor $m$ and $\cO_m = \Z + m \cO_\cK$ is the corresponding order of $\cK$.

Let $\cE$ be an elliptic curve over $\Q$ of conductor $N$. For applications to anticyclotomic Iwasawa theory, one would like a module of universal norms in $\cE(\cK[p^n])$ as $n$ varies; that is, a collection of Heegner points $y_{p^n} \in E(\cK[p^n])$ such that
\begin{equation}\label{compatible}
\Tr_{\cK[p^{n+1}]/\cK[p^n]} y_{p^{n+1}} = y_{p^n}.
\end{equation}
The images $\widetilde{y}_{p^n}$ of the points $x_{p^n}$ constructed above under a fixed modular parametrization $X_0(N) \to \cE$ do not satisfy this relation, but instead satisfy the ``vertical distribution relation'' (see~\cite[Lem.2]{perrin-riou:iwasawa}): 
$$
\Tr_{\cK[p^{n+1}]/\cK[p^n]} \widetilde{y}_{p^{n+1}} = a_p \widetilde{y}_{p^n} - \widetilde{y}_{p^{n-1}}, \qquad n > 1. 
$$
As explained in \cite[p.3]{nekovar:parity2}, this relation, together with standard techniques from the theory of linear recurrences, allows one to modify the cycles $\widetilde{y}_{p^n}$ into a family satisfying (\ref{compatible}).

This article establishes, under the assumption that $p$ is inert in the CM field, a vertical distribution relation for some higher-dimensional Shimura varieties, where the embedding of the non-split torus $\Res_{\cK/\Q} \G_m \to \GL_2$ defining Heegner points is replaced by an embedding of unitary groups defining special one-dimensional cycles on a Shimura threefold. These cycles have their origin in the conjectures of Gan, Gross and Prasad \cite{gan-gross-prasad}; the intersection theory of variants of these cycles has been studied in the work of Howard \cite{howard:kr}, and work on a Gross--Zagier formula for them has been initiated via the arithmetic fundamental lemma of Zhang \cite{zhang:afl} and Rapoport, Terstiege and Zhang \cite{rapoport-terstiege-zhang}. We work with the versions of these cycles defined in \cite{jetchev:unitary}, where a horizontal distribution relation is proven (again under the assumption that $p$ is inert).

\subsection{Main theorem}\label{subsec:thm}
Let $F$ be a totally real field and let $E/F$ be a totally imaginary quadratic extension, for which we pick once and for all an embedding into $\mathbb{C}$. Let $\tau$ be a finite prime of $F$, \emph{inert} in $E$, and fix an embedding of $\overline{F}$ into $\overline{F}_\tau$; we will continue to write $\tau$ for the prime in any finite extension of $F$ by this choice.  Let $W \subset V$ be an embedding of $E$-hermitian spaces with signatures $(1,1)$ (resp. $(2,1)$) at the distinguished real place of $F$ and $(2, 0)$ (resp. $(3,0)$) at the other real places. One has algebraic groups $\G = \Res_{E/F}(\U(V) \times \U(W))$ and $\H = \Res_{E/F} \U(W)$, and an embedding $\H \hookrightarrow \G$, described in Section \ref{sec:notation}. In Section \ref{sec:sv}, a particular compact $K \subset \G(\A_f)$ (for which $\tau$ is \emph{allowable} in the sense of \cite[Defn.1.1]{jetchev:unitary}, as recalled in Section \ref{subsec:allowable}), and Hermitian symmetric domain $X$ are chosen, which give rise to a Shimura variety $\Sh_K(G, X)$ and a family $\cZ_K(\G, \H)$ of special one-cycles on this threefold. The cycles in $\cZ_K(\G, \H)$ are defined over abelian extensions of $E$.

Attached to this data is the Hecke polynomial given by $\ds H_\tau(z) = \sum_{i=0}^6 C_i z^i\in \mathcal{H}_\tau[z]$, where $\mathcal{H}_\tau$ is a local Hecke algebra whose definition is recalled in Section~\ref{sec:hecke}.

Write $E[\tau^n]$ for the ring class field of $E$ of conductor $\tau^n$, that is, the abelian extension of $E$ whose norm subgroup is $E^\times \cdot \widehat{\cO_{\tau^n}}^\times \subset \widehat{E}^\times$ where $\cO_{\tau^n} = \cO_F + \tau^n \cO_{E}$ (here, $\tau^n$ denotes the $n$th power of the prime ideal of $\cO_F$ corresponding to $\tau$). If $L$ is any extension of $E$, write $L[\tau^n]$ for the compositum $L \cdot E[\tau^n]$. Our main theorem holds under two assumptions:

\begin{assumptions} \label{assumptions}

\begin{enumerate}[A.]
\item There exists a cycle $\xi_1 \in \cZ_K(\G, \H)$ defined over a finite extension $L$ of the Hilbert class field $E[1]$, which is abelian over $E$ and in which the chosen extension of $\tau$ to $E[1]$ splits completely.
\item The local invariants at $\tau$ (see Section \ref{subsec:galois} or \cite[Prop.3.4]{jetchev:unitary} for the definition) are given by $\inv_\tau(\xi_1) = (0,0)$.

\end{enumerate}
\end{assumptions}
Fixing such an $L$, the vertical distribution relation is then:

\begin{thm}\label{thm:main}
Suppose that $\tau$ is allowable for $(\G, \H, K)$ as defined in Paragraph \ref{subsec:allowable}. Under the assumptions listed in \ref{assumptions}, there exists a family of cycles $\xi_n \in \cZ_K(\G, \H)$ such that
\begin{itemize}
\item The field of definition of $\xi_n$ is $L[\tau^n]$ .
\item For all sufficiently large $n$, one has
$$
\Tr_{L[\tau^{n+6}]/L[\tau^{n+5}]} \left( C_6 \xi_{n+6} + \dots + C_1\xi_{n+1} + C_0 \xi_n \right) = 0.
$$
\end{itemize}
\end{thm}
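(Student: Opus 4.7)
The plan is to construct the cycles $\xi_n$ recursively using the action of Hecke correspondences at $\tau$, then establish the distribution relation via a local computation matching Galois traces against the Hecke polynomial $H_\tau(z)$. This follows the general strategy of Perrin-Riou and Nekov\'a\v{r} for Heegner points, but adapted to the higher-rank unitary setting.

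For the construction, since $\tau$ is inert in $E$, the local unitary group $\U(W)(F_\tau)$ contains a natural filtration by congruence subgroups that, via the reciprocity law for $E[\tau^n]$, corresponds to the tower $\{L[\tau^n]\}$. I would define $\xi_n$ as the image of $\xi_1$ under a specific element $g_n \in \H(\A_f)$ whose $\tau$-component has ``depth $n$'', mimicking the classical Heegner tower. Using the description of fields of definition of special cycles on unitary Shimura varieties given in \cite{jetchev:unitary}, together with Assumption B (which trivializes the local $\tau$-invariant of $\xi_1$), one checks that $\xi_n$ is defined over $L[\tau^n]$ and that the extension $L[\tau^{n+1}]/L[\tau^n]$ becomes nontrivial once $n$ is large enough, with Galois group identified with a quotient of the local torus.

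The central step is the identification of $\Tr_{L[\tau^{n+1}]/L[\tau^n]} \xi_{n+1}$ with the action of a specific element $R_n$ of the local Hecke algebra $\calH_\tau$ applied to $\xi_n$. By reciprocity, the Galois trace becomes a sum over coset representatives of $\U(W)(F_\tau)$ modulo the stabilizer of $\xi_n$, and the Cartan decomposition converts this sum into a Hecke operator. The assumption that $\tau$ is allowable for $(\G,\H,K)$ ensures the local compact $K_\tau$ has the right shape for this translation to be clean, and for $\calH_\tau$ to contain the operators $R_n$. Iterating, one obtains expressions for the multi-step traces $\Tr_{L[\tau^{n+k}]/L[\tau^n]}\xi_{n+k}$ in terms of Hecke operators acting on $\xi_n$. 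The six-term vertical relation then emerges from a polynomial identity in $\calH_\tau$: the generating series for the iterated Hecke operators $R_n$ satisfies a degree-six recursion whose coefficients are precisely the $C_i$'s of $H_\tau(z)$; this is the Cayley--Hamilton relation for the six-dimensional Satake representation of $\G$ at the inert prime $\tau$, reflecting that the local Langlands dual of $\G_\tau$ has rank six.

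The main obstacle is the second step above: pinning down the exact identification of Galois traces with $\calH_\tau$-operators, and the precise matching of normalizations so that the coefficients appearing in the recurrence are the \emph{same} $C_i$ that define the Hecke polynomial $H_\tau(z)$. This requires a detailed analysis of how $\U(W)(F_\tau)$ acts on the orbit of $\xi_1$ through the Shimura datum at $\tau$, using the allowability of $\tau$ to ensure that congruence subgroups of the torus map onto Galois groups of layers of the tower in a predictable way. The need to assume $n$ is sufficiently large accommodates the boundary behavior in small depths, where the local stabilizer of $\xi_1$ may be larger than the generic one and the cosets $T_n$ need not be a torsor under the expected quotient. Once the Hecke-theoretic matching is established for the generic depth, the polynomial identity in $\calH_\tau$ and standard manipulation of the trace yield the stated relation.
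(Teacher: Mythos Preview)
Your outline has the right shape but misses the key technical device, and two of your concrete claims are not correct as stated.

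First, the construction: defining $\xi_n$ as the image of $\xi_1$ under an element $g_n \in \H(\A_f)$ cannot work, because by the Shimura reciprocity law \eqref{eq:shimurarec} the $\H(\A_f)$-action on cycles \emph{is} the Galois action, so any such translate is a Galois conjugate of $\xi_1$ and still defined over $L$. The paper instead moves the $\tau$-component of $g_0$ inside $G_\tau/K_\tau = \Hyp_\tau$, i.e.\ in the Bruhat--Tits building for $G_\tau$: one chooses a special apartment $\cA_V'$ meeting $\cB(W_\tau)$ in a half-line and sets $x_n = (\delta_V^{-n} u\, x_V,\ \delta_W^n x_W)$ for a unipotent $u$ determining $\cA_V'$. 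One then checks directly that $\inv_\tau(x_n) = (n,n)$, which by the local conductor formula forces the field of definition to be $L[\tau^n]$.

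Second, and more seriously, the identification ``$\Tr_{n+1,n}\xi_{n+1} = R_n \cdot \xi_n$ for some $R_n \in \calH_\tau$'' is not what happens, and the Cartan decomposition does not produce it: the trace is a sum over $H_\tau$-cosets (specifically $H_n/H_{n+1}$ with $H_n = \Stab_{H_\tau}(x_n)$), not over $K_\tau$-double cosets. The paper's substitute for your missing step is to introduce \emph{partial} Hecke operators $\cU_V,\cV_V,\cS_V,\cU_W,\cV_W,\cS_W$ on $\Z_{(p)}[\Hyp_\tau]$ that lie in a ring $R$ strictly larger than the image of $\calH_\tau$; in the commutative quotient $\cI$ obtained by imposing $\cS_V=q^3$, $\cS_W=q$, the Hecke polynomial factors completely and in particular $H_\tau(\cV_V\cV_W)=0$. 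The cycles are arranged so that $\cV_V\cV_W\, x_{n+1}=x_n$. One then shows (Lemma~\ref{lem:trace}) that $\Tr_{n+1,n}\xi_{n+1}$ equals $q^{-5}\pi\bigl(\sum_{h\in H_n/H_{n+1}} h\cdot x_{n+1}\bigr)$, that this $H_n$-averaged element is \emph{balanced} (so $\cI$ acts on it), and that $\cV_V,\cV_W$ commute with the $H_\tau$-action (Lemma~\ref{lem:commute}). Combining these, $\Tr_{n+6,n+5}\sum C_i\xi_{n+6-i}$ becomes $H_\tau(\cV_V\cV_W)$ applied to a balanced element, hence zero. Your ``Cayley--Hamilton in $\calH_\tau$'' heuristic is replaced by a genuine root of $H_\tau$ in $\cI$; the polynomial identity lives in this auxiliary ring, not in the spherical Hecke algebra itself.
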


\begin{rem}
It would be useful to have arithmetic conditions to guarantee Assumption \ref{assumptions}A in terms of a ``Heegner hypothesis'' on the pair $(E, K)$, particularly in the case that $L = E[1]$ is the Hilbert class field of $E$. This would require an extension of the results of \cite{jetchev:unitary} to split and ramified primes. The case of general $L$ may be necessary for arithmetic applications (c.f. \cite{aflalo-nekovar} for an instance where such a generalization is needed for $\GL_2$) and the result is no harder to prove. Assumption (\ref{assumptions})B holds for almost all allowable $\tau$, provided that Assumption(\ref{assumptions})A holds. 
\end{rem}

As explained in Section \ref{sec:applications}, the above theorem can be used, together with a suitable choice of representation of the local group $\U(V)(F_\tau) \times \U(W)(F_\tau)$, to construct norm-compatible families $\{\widetilde{\xi}_n\}$. 

There is a variant of this theorem using fewer terms, which may be more useful for computation -- see Remark \ref{shorter_relation}.

\subsection{Strategy of proof}
Theorem \ref{thm:main} would follow formally from the following ``facts,'' if they were true:
\begin{itemize}
\item There are operators $\cU$ and $\cV$ on $\cZ_K(\G, \H)$ such that $\xi_n:= \cU^n \xi_0$ is defined over $E[\tau^n]$ and, for sufficiently large $n$, $\cV \xi_n = \xi_{n-1}$.
\item This operator $\cV$ is a formal root of the Hecke polynomial, in the sense that $H_\tau(\cV)$ induces the $0$ endomorphism of $\mathbb{Z}[\cZ_K(\G, \H)]$.
\end{itemize}
Formalizing these ``facts'' is difficult on the level of the cycles themselves, but turns out to work on the level of the Bruhat--Tits building attached to ($\G$, $\tau$), which is a product of two trees. We recall the definitions of the cycles and buildings in Section \ref{sec:sv}, and then introduce the $\cV$ operator in Section \ref{sec:hecke}, showing that it is a root of the Hecke polynomial; we then show how to descend this to the level of cycles in Section \ref{sec:proof}. We conclude with Section \ref{sec:applications}, which explains how to build norm-compatible families.

%
%

\section{Unitary Shimura Varieties}\label{sec:sv}
In this section we recall the constructions and notation of \cite[Section 2]{jetchev:unitary}. The reader is referred to \emph{loc. cit.} for proofs and references for these facts.

\subsection{Global fields and unitary groups}\label{sec:notation}
Write $D$ for the orthogonal complement of $W$ in $V$. Let $\G_V = \Res_{F/\Q} \U(V)$ and $\G_W = \Res_{F/\Q} \U(W)$; these are algebraic groups over $\Q$. The decomposition $V = W \perp D$ gives an embedding $\G_W \hookrightarrow \G_V$: if $R$ is a $\Q$-algebra, then $\U(W)(R)$ acts on $V \otimes R = (W \otimes R) \oplus (D \otimes R)$ by acting trivially on $D \otimes R$. Let $\G = \G_V \times \G_W$; there is a diagonal embedding
$$
\Delta \colon \G_W \hookrightarrow \G.
$$
Let $\H$ be the subgroup $\Delta(\G_W)$ of $\G$.

Recall that we have fixed a prime $\tau$ of $F$ that is \emph{inert} in $E$ and let $p$ be the rational prime below $\tau$. Let $q$ be the residue cardinality of $\tau$ (as a place of $F$), and choose a uniformizer $\varpi$ for $F_\tau$ (hence also for $E_\tau$).

Write $G_{V, \tau} = \U(V)(F_\tau)$ and $G_{W, \tau} = \U(W)(F_\tau)$. Let $G_\tau$ be the product $G_{V, \tau} \times G_{W, \tau}$ and write $H_\tau$ for the diagonally embedded copy of $G_{W, \tau}$ in $G_\tau$ (note that the symbol $H_\tau$ can denote either this group or the Hecke polynomial, depending on the context).

\subsection{Hermitian symmetric domains}
Let $X_V$ be the set of negative definite lines in $V \otimes \R$ (the tensor product taken with respect to the distinguished embedding), and similarly $X_W$ the set of negative definite lines in $W \otimes \R$. 

Setting $X = X_V \times X_W$, the diagonal embedding $W \hookrightarrow V \oplus W$ induces an embedding of $X_W$ into $X$; write $Y$ for the image of $X_W$.

\subsection{Compact-open subgroups of $\G(\A_f)$} \label{subsec:allowable}
Fix a compact-open subgroup $K$ of $\G(\A_f)$. We now make the assumption that $\tau$ is allowable for $(\G, \H, K)$ in the sense of \cite[Defn.1.1]{jetchev:unitary}; namely, writing $K_\tau$ for $K \cap \G_\tau \subset \G(\A_f)$, we assume that
\begin{itemize}
\item The groups $\U(V)_{F_\tau}$ and $\U(W)_{F_\tau}$ are quasi-split.
\item One has $K_\tau = K_{V, \tau} \times K_{W, \tau}$, where $K_{V, \tau} $ and $K_{W, \tau}$ are hyperspecial maximal compact subgroups of $G_{V,\tau}$ and $G_{W, \tau}$, respectively, and $K_{V, \tau} \cap G_{W, \tau} = K_{W, \tau}$ (the intersection taken under the given embedding).
\item One has $K_{V, \tau} = K_\tau \times K_V^{(\tau)}$, where $K^{(\tau)} = K_V \cap \U(V)(\A_{F, f}^{(\tau)})$ (where $\A_{F, f}^{(\tau)}$ denotes the finite id\`eles outside of $\tau$) 
\end{itemize}

This assumption implies that there is a Witt basis $\{e_+, e_0, e_- \}$ of the hermitian space $V_\tau := V \otimes E_\tau$, i.e. a basis with respect to which the pairing is given by the matrix $\begin{pmatrix} 0 & 0 & 1 \\ 0 & 1 & 0 \\ 1 & 0 & 0 \end{pmatrix}$. Moreover, this basis has the properties that
\begin{itemize}
\item  $W_\tau := W \otimes E_\tau$ is spanned by $\{e_+, e_-\}$.
\item $K_{V, \tau}$ is the stabilizer in $G_{V, \tau}$ of the $\cO_{E, \tau}$-lattice $\Lambda_V$ generated by $\{e_+, e_0, e_-\}$.
\item $K_{W, \tau}$ is the stabilizer in $G_{W, \tau}$ of the $\cO_{E, \tau}$-lattice $\Lambda_W$ generated by $\{e_+, e_-\}$.
\end{itemize}
Note that the lattices $\Lambda_V$ and $\Lambda_W$ are self-dual.

\subsection{Complex Shimura varieties and special cycles}
The data $(\G, X)$ and $(\H, Y)$ satisfy Deligne's axioms for Shimura data; one computes (see e.g. \cite[\textsection 2.2.6]{jetchev:unitary})  that the reflex field is $E$ in both cases, and thus there are varieties defined over $E$ whose complex points are given by 
\[\Sh_{K_V} (\G_V , X_V )(\C)=\G_V(\Q)\backslash (\G_V(\A_f)\times X_V)/K_V,\]
\[\Sh_{\Delta(K_W)} (\H , Y)(\C)=\H(\Q)\backslash (\H(\A_f)\times Y)/\Delta(K_W).\]
One also has $\Sh_K(\G,X)=\Sh_{K_V} (\G_V , X_V ) \times \Sh_{K_W} (\G_W , X_W )$, with complex points given by 
\[\Sh_{K} (\G , X )(\C)=\G(\Q)\backslash (\G(\A_f)\times X)/K.\]
For any $g \in \G(\A_f)$, there is a ``special cycle'' $\mathcal{Z}_K(g)$, which is the image of $gK\times Y$ in $\Sh_K(\G,X)(\C)$; it is a subvariety of $\Sh_K(\G, X)_\C$. Let $\cZ_K(\G, \H)$ denote the set of all cycles obtained in this way. It is shown in \cite[\textsection 2.3]{jetchev:unitary} that the association $g \mapsto \mathcal{Z}_K(g)$ induces a bijection
\[\mathcal{Z}_K(\G,\H)\simeq  \text{N}_{\G(\Q)}(\H(\Q))\backslash \G(\A_f)/K,\]
where the normalizer is explicitly given by
$$ \text{N}_{\G(\Q)}(\H(\Q))= \Delta(\H(\Q))\left(\underbrace{1\times \U(W^\perp)(\Q)}_{\subset{\G_V(\Q)}}\times \underbrace{\mathbf{Z}_\H(\Q)}_{\subset{\G_W}(\Q)}\right)\subset\G(\Q).$$
 
\subsection{Galois action on cycles}\label{subsec:orbits}
It is shown in \cite[\textsection 2.3]{jetchev:unitary}, using Shimura reciprocity, that the cycles $\mathcal{Z}_K(g)$ are defined over abelian extensions of $E$. Explicitly, given $\sigma \in \Gal(E^{\ab} / E)$, let $s_\sigma \in \mathbb{A}_{E}^\times$ be any element such that $\text{Art}_E(s_\sigma) = \sigma$ where 
$$
\Art_E \colon E^\times \backslash  \mathbb{A}_{E}^\times \ra \Gal(E^\ab / E)
$$
is the Artin map. Let $\T^1 = \U^1_\Q$ and let $\nu \colon \H \to \T^1$ be the determinant map. Consider the homomorphism $r = (r_f, r_\infty) \colon \A_E^\times \to \T^1(\A)$ defined by $r(s)={\overline{s}}/{s}$. Then there exists $h_\sigma \in \H(\A_f)$ such that $\nu(h_\sigma) = r_f(s)$, and for any such choice, one has
\begin{equation}\label{eq:shimurarec}
\sigma(\mathcal{Z}_K(g))=\mathcal{Z}_K(h_\sigma g).
\end{equation}

This description implies that the Galois orbits of cycles receive a surjection
\[ 
\H(\A_f) \backslash \G(\A_f) / K \twoheadrightarrow  \Gal(E^{\ab}/E) \backslash \mathcal{Z}_K(\G, \H).  \]
The allowability hypothesis (Section \ref{subsec:allowable}) implies that domain of this map is of the form
\[
H_\tau \backslash G_\tau \slash K_\tau \times  \H(\A_f^{(\tau)})\backslash \G(\A_f^{(\tau)})\slash K^{(\tau)}.
\]

\subsection{Buildings for unitary groups}\label{subsec:buildings}

The local factor $H_\tau \backslash G_\tau \slash K_\tau$ appearing in the domain of the map  above can be described in terms of the 
Bruhat--Tits building for $G_\tau$. This building is a product of two buildings; one for 
$G_{V, \tau}$ and the other for $G_{W, \tau}$. Each of these buildings is, in turn, isomorphic to a bicolored graph which we now describe.  The reader is referred to \cite[\textsection 4.1]{koskivirta:congruence}  for proofs of the facts below and more details on the buildings, and to \cite[Figure 1]{jetchev:unitary} for a picture.

A ``hyperspecial lattice'' is a lattice $L$ of $V_\tau$ which is self-dual, and a ``special lattice'' $L$ is a lattice which is almost self-dual, which means that one has strict containments  $\varpi L^\vee \subsetneq L \subsetneq L^\vee$. The (underlying bicolored graph of the) Bruhat--Tits building for $G_{V, \tau}$, which we denote by $\mathcal{B}(V_\tau)$, consists of a black vertex for each hyperspecial lattice, and a white vertex for each special lattice. Two vertices are connected by an edge if and only if the corresponding lattices have index $q$ in one another. One calculates that each black vertex has $q^3+1$ white neighbors and each white vertex has $q + 1$ black neighbors.  A choice $\{ f_+, f_0, f_- \}$ of Witt basis for $V_\tau$ determines an apartment in this building whose hyperspecial vertices are the self-dual lattices $\langle \pi^m f_+, f_0, \pi^{-m} f_-\rangle$ for $m \in \mathbb{Z}$; a ``half-apartment'' is the subset of an apartment where $m \geq n$ for some fixed $n$.

One defines $\mathcal{B}(W_\tau)$ similarly; in this case, each black vertex has $q+1$ white neighbors, and each white vertex has $q+1$ black neighbors. 

The building $\mathcal{B}(G_\tau)$ is then the product of these graphs. The group $G_\tau$ acts on $\mathcal{B}(G_\tau)$, preserving incidence relations and geodesics. As this action is transitive on the set of pairs of hyperspecial lattices,  the quotient $G_\tau / K_\tau$ is identified with the set of (pairs of) black vertices in $\mathcal{B}(G_\tau)$. The black vertices of $\mathcal{B}(V_\tau)$, resp. $\mathcal{B}(W_\tau)$, resp. the pairs of black vertices in $\mathcal{B}(G_\tau)$ will be described in the sequel as $\Hyp_{V, \tau}$, resp. $\Hyp_{W, \tau}$, resp. $\Hyp_\tau$. The sets $\Hyp_{V, \tau}$ and $\Hyp_{W, \tau}$ are endowed with distance functions, normalized so that the distance between two neighboring black points (i.e. two black points that share a white neighbor in the bicolored graph) is $1$.

Note that the choice of lattices in Section \ref{subsec:allowable} distinguishes a particular black vertex in each graph; we will informally refer to this vertex as the ``origin,'' and to their product as the ``origin'' in the product building.
 
\subsection{Galois action via Bruhat--Tits buildings}\label{subsec:galois}
 
One can use the building to compute the orbits of the Galois action on the cycles. Given a point $x=(L_{V_\tau},L_{W_\tau}) \in \mathbf{Hyp}_\tau$, write  $$\inv_\tau(x):=\left(\text{dist}(L_{V_\tau},\pr_{W_\tau}(L_{V_\tau})),\text{dist}(L_{W_\tau},\pr_{W_\tau}(L_{V_\tau}))\right),$$ where $\pr_{W_\tau}$ is the projection as in \cite[\textsection 3]{jetchev:unitary}. The following result classifies the 
$H_\tau$-orbits in $\mathbf{Hyp}_\tau$ \cite[Prop.3.4]{jetchev:unitary} 
\begin{prop}\label{prop:orbits}
Two hyperspecial points $x,y\in \mathbf{Hyp}_\tau$ lie in the same $H_\tau$-orbit if and only if $\inv_\tau(x)=\inv_\tau(y)$.
\end{prop}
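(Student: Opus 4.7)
The plan is to prove the two directions separately. The forward direction is straightforward: the diagonal copy $H_\tau$ of $\U(W)(F_\tau)$ acts trivially on $D_\tau = W_\tau^\perp$, so it commutes with the orthogonal projection $\pr_{W_\tau}$, and its action on each of $\mathcal{B}(V_\tau)$ and $\mathcal{B}(W_\tau)$ is by isometries of the graph metric. Hence $\inv_\tau$ is constant on $H_\tau$-orbits.

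For the converse, I would proceed by two reductions. Given $x = (L_V, L_W)$ and $y = (L_V', L_W')$ with $\inv_\tau(x) = \inv_\tau(y)$, since $\U(W)(F_\tau)$ acts transitively on hyperspecial vertices of $\mathcal{B}(W_\tau)$, one can find $h_1 \in H_\tau$ with $h_1 \cdot L_W = L_W'$; replacing $x$ by $h_1 \cdot x$ reduces us to the case $L_W = L_W' = \Lambda_W$. It then suffices to prove that the stabilizer $K_{W,\tau}$ of $\Lambda_W$ in $H_\tau$ acts transitively on the set of hyperspecial lattices $L_V \subset V_\tau$ with fixed invariant $(a,b) = (\dist(L_V, \pr_{W_\tau}(L_V)), \dist(\Lambda_W, \pr_{W_\tau}(L_V)))$.

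For this final step, I would produce an explicit normal form: using the Witt basis $\{e_+, e_0, e_-\}$ of Section \ref{subsec:allowable}, write down a model lattice $L_V^{a,b}$ in the standard apartment of $\mathcal{B}(V_\tau)$ whose projection onto $W_\tau$ lies at signed distance $b$ from $\Lambda_W$ and which itself lies at distance $a$ from this projection, and verify by a direct lattice computation that $\inv_\tau(L_V^{a,b}, \Lambda_W) = (a,b)$. To show that every hyperspecial $L_V$ with this invariant lies in the $K_{W, \tau}$-orbit of $L_V^{a,b}$, I would first use the $K_{W,\tau}$-action on $\mathcal{B}(W_\tau)$, which is transitive on hyperspecial vertices at fixed distance from $\Lambda_W$, to arrange $\pr_{W_\tau}(L_V) = \pr_{W_\tau}(L_V^{a,b})$, and then show that the stabilizer in $K_{W,\tau}$ of this common projection, acting on the fiber of $\pr_{W_\tau}$ above it, is transitive on hyperspecial lattices at distance $a$ from a fixed one.

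The hard part will be this last transitivity statement, which requires a careful analysis of the interaction between the hermitian form on $V_\tau$, the self-duality constraint on $L_V$, and the geometry of the fiber of $\pr_{W_\tau}$. I would split into cases according to the relative position of $\pr_{W_\tau}(L_V)$ and $\Lambda_W$, choose a Witt basis adapted to each case, and reduce to a rank-one computation using the doubly transitive action of $K_{W,\tau}$ on isotropic lines in $\Lambda_W/\varpi\Lambda_W$. Since the proposition is recalled from \cite[Prop.3.4]{jetchev:unitary}, an alternative is simply to refer to \emph{loc. cit.}
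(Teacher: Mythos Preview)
The paper does not prove this proposition; it is stated with the attribution ``\cite[Prop.3.4]{jetchev:unitary}'' and no argument is given. Your final sentence already identifies this, and citing \emph{loc.\ cit.} is precisely the route the paper takes.

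Your direct outline is a plausible strategy and the forward direction is correct as written. For the converse, the two reductions are sound: transitivity of $G_{W,\tau}$ on $\Hyp_{W,\tau}$ lets you fix $L_W = \Lambda_W$, and equivariance of $\pr_{W_\tau}$ under $H_\tau$ (which you invoke implicitly) lets you then normalize the projection. The part you flag as ``hard''---transitivity of the stabilizer of $\pr_{W_\tau}(L_V)$ on the fiber at distance $a$---is indeed the substantive content, and your sketch there is thin: the reduction to ``a rank-one computation using the doubly transitive action of $K_{W,\tau}$ on isotropic lines in $\Lambda_W/\varpi\Lambda_W$'' is not obviously the right mechanism, since the fiber of $\pr_{W_\tau}$ over a fixed vertex of $\cB(W_\tau)$ is governed by the $D_\tau$-component (on which $H_\tau$ acts trivially) together with the unipotent radical, not by the isotropic lines in $W_\tau$. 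In \cite{jetchev:unitary} the argument passes through the structure of special apartments and the explicit unipotent matrices $u$ (as in Section~\ref{subsec:cycles-def} here), and you would likely need to invoke the analogue of \cite[Lem.3.5]{jetchev:unitary} rather than a residue-field computation. So your outline is in the right spirit but the endgame would need reworking; in any case, for the purposes of this paper the citation suffices.
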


 Write $\mathbb{U}^1(n):=\nu_\tau(\mathcal{O}_n^\times)$, where $\nu_\tau \colon E_\tau^\times \to \mathbb{U}^1(L_\tau)$ is given by $\nu_\tau(s)=\overline{s}/s$, and $\mathcal{O}_n=\mathcal{O}_{E, \tau}+ \varpi^n \mathcal{O}_{F, \tau}$. The Shimura reciprocity law implies that
 $$\det{\text{Stab}_{H_\tau}(L_{V_\tau},L_{W_\tau})}=\mathbb{U}^1(c_\tau([L_{V_\tau},L_{W_\tau}])),$$ where $c_\tau$ denotes the local conductor given by \cite[Thm.1.1]{jetchev:unitary}, i.e.
 $$c_\tau([L_{V_\tau},L_{W_\tau}]):=q^{\min\{\text{dist}(L_{V_\tau},\pr_{W_\tau}(L_{V_\tau})),2\text{dist}(L_{W_\tau},\
 \pr_{W_\tau}(L_{V_\tau}))\}}.$$

%
%

\section{Hecke operators and partial Hecke operators}\label{sec:hecke}
\subsection{The Hecke polynomial}
The local Hecke algebra $\mathcal{H}_\tau = \mathcal{H}(G_\tau, K_\tau)$ is the set of $K_\tau$-bi-invariant continuous compactly-supported $\mathbb{Z}$-valued functions on $G_\tau$. There are natural actions of $\mathcal{H}_\tau$ on $\mathcal{B}(G_\tau)$ and on the space $\cZ_{K}(\G, \H)$, compatible with the map defined at the end of Section \ref{subsec:orbits}. Explicitly, given an element $g\in G_\tau$, let $\mathbf{1}_{K_\tau g K_\tau}$ be the characteristic function of the double coset $K_\tau g K_\tau$ (such functions generate $\mathcal{H}_\tau$). This acts on both $\Hyp_\tau$ and $\mathcal{Z}_{K}(\G, \H)$ as follows: if $K_\tau g K_\tau=\sqcup g_i K_\tau$, then for $h\in G_\tau$, the corresponding endomorphism is $[h]\mapsto \sum[hg_i]$, where $[h]$ denotes the class of $h$ in either $G_\tau /K_\tau$ or the cycle $\cZ_K(\ldots,1, 1,  h, 1, 1, \ldots) \in \mathcal{Z}_{K}(\G,\H)$, respectively. 

Given a co-character $\mu$ of $\widehat\G$, there is a polynomial $H_\tau(z)$ with coefficients in $\mathcal H_\tau$, called the Hecke polynomial (this polynomial is originally defined by Langlands; we use the version of Blasius-Rogawski found in \cite[\textsection 6]{blasius-rogawski:zeta}). An explicit formula for the Hecke polynomial is given in our setting by \cite[Thm.4.1]{jetchev:unitary}.

To state that formula, let $\delta_V= \text{diag}(\varpi,1,\varpi^{-1}) \in G_{V, \tau}$ and $\delta_W= \text{diag}(\varpi,\varpi^{-1})\in G_{W, \tau}$, where matrices are written with respect to the bases chosen in Section~\ref{subsec:allowable}. Consider the Hecke operators $t_{1, 0} = \mathbf{1}_{K_\tau(\delta_V,1)K_\tau}$ and $t_{0, 1} = \mathbf{1}_{K_\tau(1,\delta_W)K_\tau}$. These act as \emph{adjacency} operators on $\Z[\Hyp_\tau]$; the former is the identity on $\Hyp_{W, \tau}$ and sends a point in $\Hyp_{V, \tau}$ to the formal sum of its neighbors, and similarly for the latter. Then one has:

 \begin{thm}\label{thm:heckepol}
 The Hecke polynomial $H_\tau(z) \in \mathcal{H}_\tau[z]$ at the place $\tau$ for the Shimura datum $(\G,X)$ is given by 
  \[H_{\tau}(z)=H^{(2)}(z)H^{(4)}(z)\]
where \[H^{(2)}(z) = z^2 - q^2(t_{0,1} - (q - 1))z + q^6\] and 
\begin{align*}H^{(4)}(z) &= z^4 \\
&+ (-t_{1,0}t_{0,1} + (q - 1)(t_{1,0} + t_{0,1}) - (q - 1)^2)z^3 \\
&+ q^2 (t_{1,0}^2 +q^2t_{0, 1}^2 -2(q-1)t_{1,0} -2q^2(q-1)t_{0,1} -q^4 -2q^3 +2q^2 -2q+1 )z^2 \\
&+ q^6(-t_{1,0}t_{0,1} + (q - 1)(t_{1,0}+t_{0,1}) - (q - 1)^2)z \\
&+ q^{12}.
\end{align*}
\end{thm}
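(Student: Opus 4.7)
Proof proposal. The plan is to follow the Langlands--Blasius--Rogawski prescription recalled in \cite[\textsection 6]{blasius-rogawski:zeta}: the Hecke polynomial is, by definition, the image under the Satake isomorphism of the characteristic polynomial $\det(z - r_{-\mu}(t \rtimes \Frob_\tau))$, where $t$ is a generic element of the complex dual torus and $r_{-\mu}$ is the representation of the local L-group ${}^L\G_{F_\tau}$ of extremal weight $-\mu$, $\mu$ being the Hodge cocharacter of the Shimura datum $(\G, X)$. My strategy is to carry out this computation completely explicitly in our setting and then match the answer against the stated formulas.

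The first step is to pin down the L-group and the cocharacter. Since $\tau$ is inert in $E/F$ and both unitary groups are quasi-split, split by the unramified quadratic extension $E_\tau/F_\tau$, we have
\[ {}^L\G_{F_\tau} \;\cong\; \bigl(\GL_3(\mathbb{C}) \times \GL_2(\mathbb{C})\bigr) \rtimes \langle c \rangle, \]
with $c$ acting on each factor by the appropriate Chevalley involution determined by the Witt basis of Section~\ref{subsec:allowable}. The signatures $(2,1)$ and $(1,1)$ at the distinguished real place produce a minuscule cocharacter $\mu = (\mu_V, \mu_W)$, while at the other (totally definite) real places $\mu$ is central and contributes only explicit powers of $q$. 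In standard coordinates, $\mu_V$ is of $\GL_3$-type $(1,1,0)$ and $\mu_W$ is $(1,0)$, so the restriction of $r_{-\mu}$ to the identity component of ${}^L\G_{F_\tau}$ is the exterior tensor product of irreducibles of dimensions $3$ and $2$.

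The factorization $H_\tau(z) = H^{(2)}(z) H^{(4)}(z)$ should then fall out of the decomposition of $r_{-\mu}$ as a representation of the \emph{full} L-group. Because the Galois element $c$ intertwines the standard representation with its dual on each factor, the six-dimensional space of $r_{-\mu}$ decomposes under the semidirect product into a two-dimensional eigenspace and a four-dimensional one; evaluating the characteristic polynomial of $(t \rtimes c)$ on each piece will yield $H^{(2)}$ and $H^{(4)}$ respectively. Concretely, I would trace through this computation on a Satake parameter of the form
\[ \bigl(\diag(\alpha, 1, \alpha^{-1}),\, \diag(\beta, \beta^{-1})\bigr) \rtimes c, \]
producing symmetric Laurent polynomials in $\alpha$ and $\beta$ as coefficients of $z$.

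The last step is to express these symmetric polynomials in the chosen Hecke generators $t_{1,0}$ and $t_{0,1}$. Their Satake transforms are computed by decomposing the double cosets $K_\tau (\delta_V, 1) K_\tau$ and $K_\tau (1, \delta_W) K_\tau$ into single cosets and applying Macdonald's formula; substituting back into the expansion of the characteristic polynomial and collecting terms produces the claimed coefficients. The principal obstacle is the careful bookkeeping of the many powers of $q$, which come simultaneously from the modulus character in the Satake isomorphism, from the Frobenius twist at the inert prime, and from the contributions of $\mu$ at the non-distinguished real places; these normalizations must be arranged so that $t_{1,0}$ and $t_{0,1}$ genuinely act as the adjacency operators on $\mathcal{B}(V_\tau)$ and $\mathcal{B}(W_\tau)$ described in Section~\ref{subsec:buildings}. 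With those conventions fixed, the stated coefficients of $H^{(2)}$ and $H^{(4)}$ emerge from a (tedious but routine) expansion, which is essentially the content of \cite[Thm.4.1]{jetchev:unitary} from which this theorem is transcribed.
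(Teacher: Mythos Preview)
The paper does not actually prove this theorem: immediately before stating it, the text says ``An explicit formula for the Hecke polynomial is given in our setting by \cite[Thm.4.1]{jetchev:unitary},'' and Theorem~\ref{thm:heckepol} is simply a transcription of that result. So there is no proof in the paper to compare against; your proposal is really a sketch of what the cited reference presumably does, and you say as much in your last sentence.

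Your outline of the Blasius--Rogawski computation is the correct overall strategy, and the identification of the local $L$-group and of $r_{-\mu}$ as a six-dimensional representation is right. One point deserves more care: you attribute the factorization $H_\tau = H^{(2)} H^{(4)}$ to a decomposition of $r_{-\mu}$ into invariant subspaces of dimensions $2$ and $4$ under the full $L$-group. But $r_{-\mu}$ restricted to $\GL_3(\C)\times\GL_2(\C)$ is a tensor product of irreducibles and hence irreducible, and passing to the semidirect product with $\langle c\rangle$ does not typically make it reducible. What is really happening is visible from the factorization in the ring $\mathcal I$ given just after Lemma~\ref{lem:combinatorics}: the six Satake ``eigenvalues'' are of the form $\{\alpha,1,\alpha^{-1}\}\times\{\beta,\beta^{-1}\}$ (up to $q$-twists), and the pair coming from the middle weight $1$ on the $\GL_3$ side separates off as $H^{(2)}$ because the resulting elementary symmetric functions already lie in the image of the $\U(W)$-Hecke algebra. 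This is a factorization of the characteristic polynomial over $\mathcal H_\tau$, not a splitting of the representation. The distinction does not affect the final formula, but your explanation of \emph{why} the polynomial factors should be adjusted.
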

Define the elements $C_i \in \mathcal{H}_\tau$ by $H_\tau (z) = C_0 z^6 + C_1 z^5 + \dots + C_6$ for $i = 0, 1, \dots, 6$.

\subsection{Partial Hecke operators}\label{subsec:partial_hecke}

We will make use of a formal factorization of the Hecke polynomial in a ring extension of $\mathcal{H}_\tau$. Let $R_V = \End(\Z_{(p)}[\Hyp_{V, \tau}])$, $R_W = \End(\Z_{(p)}[\Hyp_{W, \tau}]$, and $R = R_V \otimes R_W = \End(\Z_{(p)}[\Hyp_\tau])$, where $\Z_{(p)}$ denotes the localization of $\Z$ at $p$. The previously-defined actions on the buildings give an algebra map $\mathcal{H}_\tau \to R$ and a group map $H_\tau \to R^\times$.

We define predecessor and successor operators in $R_V$ and $R_W$. To keep the analogy with the case of $\GL_2$, we use the notation $\mathcal{U}$ for operators which raise the distance of a point from the origin, and $\mathcal{V}$ for operators which lower it.

Thus, given a self-dual lattice $L \neq \Lambda_V$ in $\mathcal{B}(G_{V, \tau})$, set:
\begin{itemize}
\item $\mathcal{U}_V L = \sum L'$, where the sum is taken over the self-dual lattices $L'$ of $V_\tau$  such that  $ \mathrm{dist}(L,L')=1 \text{ and } \mathrm{dist}(L', \Lambda_V) > \dist(L, \Lambda_V)$.
\item  $\mathcal{V}_V L$ is the unique self-dual lattice $L'$ of $V_\tau$ such that $\mathrm{dist}(L, L') = 1$ and $\mathrm{dist}(L', \Lambda_V) < \dist(L, \Lambda_V)$. 
\item $\mathcal{S}_V L = L + \sum L'$, where the sum is taken over the self-dual lattices $L'$ of $V_\tau$ such that $ \mathrm{dist}(L,L')=1 \text{ and } \mathrm{dist}(L', \Lambda_V) = \dist(L, \Lambda_V)$.
\end{itemize}

To complete the definition of these three operators, writing $x_V$ for the point corresponding to $\Lambda_V$, and $S_1$ for the formal sum of points of distance $1$ from the origin, and set:
\begin{itemize}
\item $\ds \mathcal{U}_V x_V = \frac{q}{q+1} S_1$
\item $\ds \mathcal{V}_V x_V = (1-q^3) x_V + \frac{1}{q+1} S_1$
\item $\ds \mathcal{S}_V x_V = q^3 x_V$
\end{itemize}
(This definition is motivated by Lemma \ref{lem:combinatorics} below.)

Define the operators $\mathcal{U}_W$, $\mathcal{V}_W$, and $\mathcal{S}_W$ analogously, replacing both instances of $q^3$ with $q$ in the above definition. We will abuse notation and consider these as elements of $R$, e.g. writing $\mathcal{U}_W$ rather than $\mathcal{U}_W \otimes 1.$  These operators do not commute with each other. They are depicted in Figure \ref{fig:operators}.
 
\begin{rem} In the definition of $R$, the localization of the ring of coefficients at $p$ is necessary only to define the operators above at the origin. The cycles occurring in the main theorem, a priori in $\mathbb{Z}_{(p)}[\cZ_K(\G, \H)]$, are in fact in $\Z[\cZ_K(\G, \H)]$. 
\end{rem}

\begin{figure}
\begin{center}
\includegraphics[width=17cm, height=10cm]{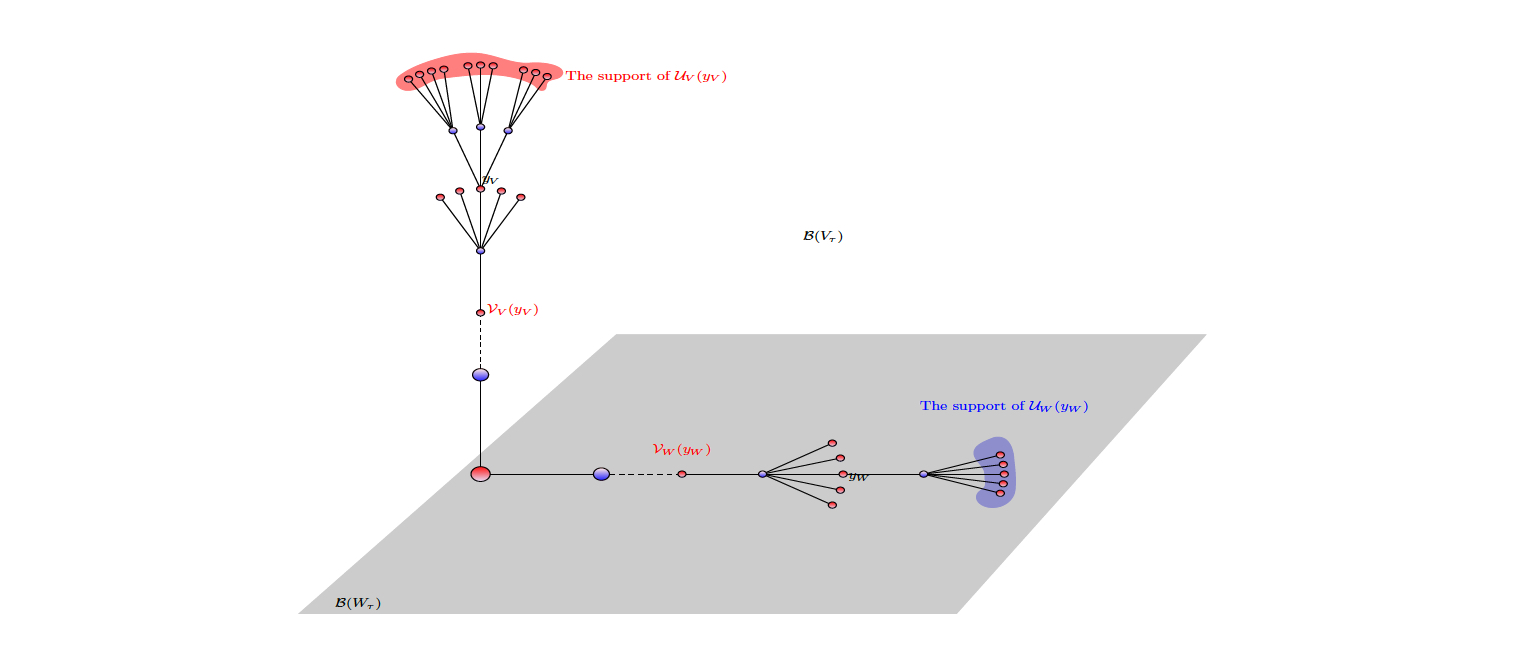}
\end{center}
\caption{Examples of elements of $R$.}
\label{fig:operators}
\end{figure}

We gather the composition relations between the various operators in the following lemma; in each case, the proof (away from the origin) is a simple counting argument.
\begin{lem}\label{lem:combinatorics}
In $R$, one has
\begin{itemize}
\item $\cV_V \cU_V = q^4$ and $\cV_W \cU_W = q^2$
\item $\cV_V + \cU_V + \cS_V = t_{1, 0} + 1$ and $\cV_W + \cU_W + \cS_W = t_{0, 1}+1$.
\item $\cV_V \cS_V = q^3 \cV_V $ and $\cV_W \cS_W = q \cV_W$.
\item $\cS_V \cU_V = q^3 \cU_V $ and $\cS_W \cU_W = q \cU_W$.
\item $(\cS_V)^2 - q^3 (\cS_V) = 0$ and $(\cS_W)^2 - q (\cS_W) = 0$.
\end{itemize}
\end{lem}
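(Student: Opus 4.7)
The proof is a pointwise verification on the Bruhat--Tits tree $\mathcal{B}(V_\tau)$ (and its analogue for $W$). Since $R = R_V \otimes R_W$ and all five identities in the lemma involve operators from only one tensor factor, it suffices to verify the $V$-identities as endomorphisms of $\mathbb{Z}_{(p)}[\Hyp_{V,\tau}]$ (the $W$-identities being entirely analogous with $q^3$ replaced by $q$, reflecting the different branching of $\mathcal{B}(W_\tau)$). The check splits naturally into the generic case $L \neq \Lambda_V$ and the origin case $L = \Lambda_V$.

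For the generic case, the strategy relies on the following structural picture. Since $\mathcal{B}(V_\tau)$ is one-dimensional and contractible (hence a tree), for a black vertex $L$ with $d:=\dist(L,\Lambda_V)\ge 1$ the path from $L$ to $\Lambda_V$ passes through a unique white vertex $w_0$ incident to $L$, and the distance-1 black neighbors of $L$ partition into three classes: the unique \emph{predecessor} $L_{-1}$ (the black neighbor of $w_0$ lying on the path to $\Lambda_V$); the \emph{siblings} of $L$ (the remaining black neighbors of $w_0$, all at distance $d$); and the \emph{successors} of $L$ (all at distance $d+1$, coming from the other white neighbors of $L$ and partitioned into ``clusters'' indexed by these non-path white neighbors). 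With this decomposition in hand, each identity becomes a one-line count:
\begin{itemize}
\item $\cV_V\cU_V L = q^4 L$, since every successor of $L$ has $L$ as its unique predecessor (and the total number of successors is $q^4$);
\item $\cV_V + \cU_V + \cS_V = t_{1,0}+1$ is the partition identity itself (the three classes plus $L$ account for $L + t_{1,0}L$);
\item $\cV_V\cS_V L = q^3\cV_V L$, because $L$ and each of its siblings share the same predecessor $L_{-1}$, accessed via the common vertex $w_0$;
\item $\cS_V\cU_V L = q^3\cU_V L$, because within each cluster $\cS_V$ sends every vertex to the full cluster-sum, so the double-sum counts each successor with multiplicity equal to the cluster size;
\item $\cS_V^2 L = q^3\cS_V L$, because the siblings-plus-self of any sibling of $L$ form the same full $w_0$-cluster as $L$'s own.
\end{itemize}

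For the at-origin case, I would plug in the formulas $\cU_V\Lambda_V = \tfrac{q}{q+1}S_1$, $\cV_V\Lambda_V = (1-q^3)\Lambda_V + \tfrac{1}{q+1}S_1$, and $\cS_V\Lambda_V = q^3\Lambda_V$, and use that $\cV_V L' = \Lambda_V$ for every $L'\in S_1$ (while $\cS_V L'$ is the $w$-cluster sum, where $w$ is the white vertex mediating $\Lambda_V \leftrightarrow L'$). Each identity then reduces to elementary arithmetic involving $|S_1|$; in fact the unusual fractional coefficients in the at-origin formulas are precisely those forced by requiring the generic identities to extend consistently to the origin, which is the ``motivation'' referred to in the preceding paragraph of the text. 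I expect no conceptual obstacle: the only real care needed is bookkeeping — tracking which white vertex mediates each adjacency in the generic case, and confirming that the at-origin computations produce integer answers on the nose despite the non-integer intermediate coefficients.
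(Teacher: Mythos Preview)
Your proposal is correct and takes essentially the same approach as the paper, which offers only the one-line justification that ``in each case, the proof (away from the origin) is a simple counting argument.'' Your write-up is a faithful expansion of exactly that: partitioning the distance-$1$ neighbours of a generic hyperspecial vertex into predecessor, siblings, and successors via the mediating white vertex, reading off each identity as a cardinality count, and then verifying the origin case by direct substitution of the special formulas (which, as you observe, are rigged precisely so that the generic identities persist there).
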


We say that an element of $\Z_{(p)}[\Hyp_{V, \tau}]$ is ``balanced'' if $\cS_V$ acts on it via $q^3$. Note that $\cU_V \cV_V = \cV_V \cU_V = q^4$ when applied to a balanced element.  We define balanced elements of $\Z_{(p)}[\Hyp_{W, \tau}]$ similarly.  An element of $\Z_{(p)}[\Hyp_\tau]$ is balanced if $\cS_V$ acts via $q^3$ and $\cS_W$ acts via $q$.

Let $R_0$ be the subring of $R$ generated by $\mathcal{H}_\tau$ and the six operators defined above, and let $\mathcal{I}$ be the quotient of $R_0$ by the relations $\cS_V = q^3$, $\cS_W = q$, and $\mathcal{U}_V\mathcal{V}_V - \mathcal{V}_V\mathcal{U}_V = \mathcal{U}_W\mathcal{V}_W - \mathcal{V}_W\mathcal{U}_W = 0$. Then $\mathcal{I}$ acts on the subgroup of $\mathbb{Z}_{(p)}[\Hyp_\tau]$ consisting of balanced elements. Moreover, $\mathcal{I}$ is a commutative ring extension of $\mathcal{H}_\tau$, so it makes sense to speak of the Hecke polynomial as an element of $\cI[z]$. Using the lemma, one calculates that it admits the following factorization there:

\[H_\tau(z)=\underbrace{(z-q^2\mathcal{U}_W)(z-q^2\mathcal{V}_W)}_{H^{(2)}(z)}\underbrace{(z-\mathcal{V}_W\mathcal{V}_V)(z-\mathcal{U}_W\mathcal{V}_V)(z-\mathcal{V}_W\mathcal{U}_V)(z-\mathcal{U}_W\mathcal{U}_V)}_{H^{(4)}(z)}\]

In particular,
\begin{lem}\label{lem:vanish}
The image of $H_\tau(z)$ in $\mathcal{I}[z]$ satisfies $H_\tau(\mathcal{V}_V\mathcal{V}_W) = 0$.
\end{lem}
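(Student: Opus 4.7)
The plan is to deduce the lemma immediately from the factorization of $H_\tau(z)$ displayed just above its statement, together with the commutativity of $\mathcal{I}$ already noted in the text. The key observation is that one of the six linear factors of $H_\tau(z)$ in $\mathcal{I}[z]$ is $(z - \mathcal{V}_W \mathcal{V}_V)$. I would then substitute $z = \mathcal{V}_V \mathcal{V}_W$ and invoke the commutation $\mathcal{V}_V \mathcal{V}_W = \mathcal{V}_W \mathcal{V}_V$: this identity holds because $\mathcal{V}_V$ lives in the tensor factor $R_V$ and $\mathcal{V}_W$ in $R_W$, so they commute in $R = R_V \otimes R_W$ and therefore in its quotient $\mathcal{I}$. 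Under this substitution, the factor $(z - \mathcal{V}_W\mathcal{V}_V)$ becomes identically $0$.

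Because $\mathcal{I}$ is commutative, the remaining five factors commute with the vanishing one, and so the entire product evaluates to $0$ in $\mathcal{I}$. This gives $H_\tau(\mathcal{V}_V \mathcal{V}_W) = 0$, as required. In other words, once commutativity and the factorization are in hand, the lemma is a one-line consequence: a product with a zero factor is zero.

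The only piece of real work hidden in this outline is the verification of the factorization itself, which the paper asserts without proof. I would check $H^{(4)}(z)$ by pairing the four roots as
\[
\bigl[(z-\mathcal{V}_W\mathcal{V}_V)(z-\mathcal{U}_W\mathcal{U}_V)\bigr]\cdot\bigl[(z-\mathcal{U}_W\mathcal{V}_V)(z-\mathcal{V}_W\mathcal{U}_V)\bigr];
\]
each bracket multiplies out to a quadratic in $z$ whose constant term is $q^6$, using $\mathcal{U}_V\mathcal{V}_V = q^4$ and $\mathcal{U}_W\mathcal{V}_W = q^2$ from Lemma \ref{lem:combinatorics}. The linear coefficients of the two quadratics are then symmetric functions in $\mathcal{U}_V+\mathcal{V}_V$ and $\mathcal{U}_W+\mathcal{V}_W$, which by the same lemma equal $t_{1,0}+1-\mathcal{S}_V$ and $t_{0,1}+1-\mathcal{S}_W$ and reduce in $\mathcal{I}$ to $t_{1,0}+1-q^3$ and $t_{0,1}+1-q$. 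Multiplying the two quadratics and comparing against the explicit formula for $H^{(4)}(z)$ in Theorem \ref{thm:heckepol} is a routine, if tedious, expansion; the factorization of $H^{(2)}(z)$ is the simpler quadratic analogue, verified in exactly the same way. This algebraic bookkeeping is the only conceivable obstacle, and even it is purely a matter of expansion — no new idea is needed beyond the relations already recorded in Lemma \ref{lem:combinatorics}.
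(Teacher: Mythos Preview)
Your proposal is correct and is exactly the paper's approach: the lemma is stated with ``In particular,'' immediately after the displayed factorization, and is meant to follow at once from that factorization together with the commutativity of $\mathcal{I}$. Your additional sketch of how to verify the factorization (pairing the roots and reducing via Lemma~\ref{lem:combinatorics}) fills in what the paper leaves as ``one calculates.''
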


%
%

\section{The main theorem}\label{sec:proof}

\subsection{Definition of the cycles}\label{subsec:cycles-def}

We now construct the sequence of special cycles $\{ \xi_n \}$ of the introduction. Recall that by Assumption \ref{assumptions} one has a cycle $\xi_0 = \xi(g_0)$ for some $g_0 \in \G(\A_f)$, defined over $L$, for which $\inv_\tau(\xi(g_0)) = (0, 0)$. By the description of the Galois action in (\ref{eq:shimurarec}), if $g \in G(\A_f)$ is such that the image of $g$ and $g_0$ in $\U(V) \times \U(W) (\A_{F, f}^{(\tau)})$ agree, then the field of definition of $\xi(g)$ will be $L[\tau^n]$, where $n$ is the local conductor of $\xi(g)$ at $\tau$. In the following, we will define $\xi_n$ by modifying $g_0$ in such a manner that only the conductor at $\tau$ changes.

Call an apartment $\cA_{V}'$ of $\cB(V_\tau)$ \emph{special} if its intersection with $\cB(W_\tau)$ is a half-line (see \cite[\textsection 3.3]{jetchev:unitary}). Let $\cA_{V}$ be the apartment defined by the 
Witt basis $\{e_+, e_0, e_-\}$ of Section \ref{subsec:allowable} and let $\cA_{V}'$ be any special apartment with the property that its intersection with 
$\cB(W_\tau)$ is the half-apartment of $\cA_{V}$ given by $\{\delta_V^n x_{V} \colon n \geq 0\}$ (see Figure~\ref{fig:apts}).

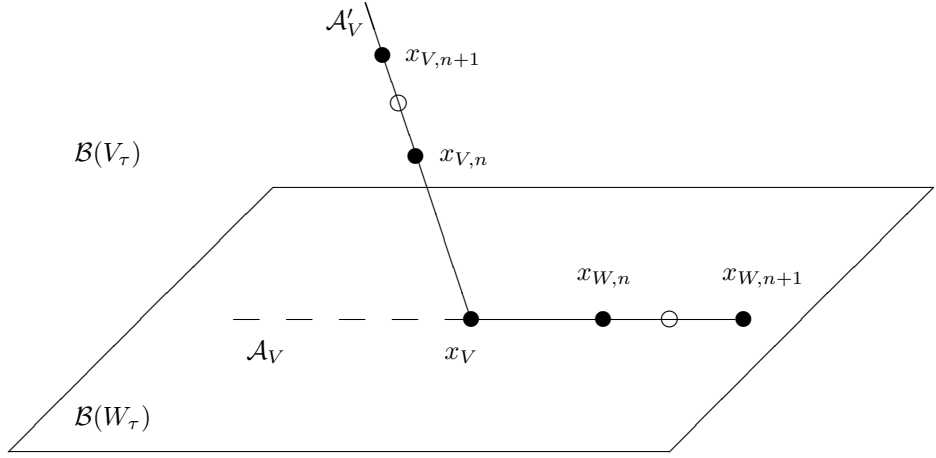
\begin{figure}
\begin{center}
\begin{picture}(300,200)

\put(0, 0){\line(250, 0){250}}
\put(0, 0){\line(1,1){100}}
\put(250, 0){\line(1,1){100}}
\put(100,100){\line(1,0){250}}
\put(175,50){\circle*{6}}
\put(90, 35){$\cA_{V}$}
\put(165, 35){$x_{V}$}
\put(250,50){\circle{6}}
\put(215, 65){$x_{W, n}$}
\put(278,50){\circle*{6}}
\put(225,50){\circle*{6}}
\put(270, 65){$x_{W, n+1}$}
\put(141.5,150){\circle*{6}}
\put(147.5,132){\circle{6}}
\put(150,148){$x_{V, n+1}$}
\put(154,112){\circle*{6}}
\put(163,110){$x_{V, n}$}
\put(120, 160){$\cA_{V}'$}

\put(175,50){\line(1,0){100}}
\multiput(175,50)(-20,0){5}{\line(-1,0){10}}
\put(25, 10){$\cB(W_\tau)$}
\put(175,50){\line(-1,3){40}}

\put(25, 110){$\cB(V_\tau)$}
\end{picture}
\end{center}
\caption{Defining $x_{n, \tau}$ via the special apartments $\cA_{V}$ and $\cA_{V}'$.}
\label{fig:apts}
\end{figure}
As explained in \cite[Lem.3.2]{jetchev:unitary}, there exists a unitary, unipotent matrix 
$$
\ds u = {\mthree {1}{\beta}{\gamma} {0}{1}{-\overline{\beta}} {0}{0}{1}} \in G_{V, \tau}
$$ 
with $\beta, \gamma \in \cO_{E_\tau}^\times$, $\beta \overline{\beta} + \gamma + \overline{\gamma} = 0$ (written in terms of the basis $\{e_+, e_0, e_-\}$) whose columns give a Witt basis determining 
$\cA_{V}'$. We then define $x_n \in \Hyp_\tau$ where  
$$
x_n := (x_{V, n}, x_{W, n}), 
$$
with $x_{V, n} = \delta_{V}^{-n} u x_{V} \in \Hyp_{V, \tau}$ and $x_{W, n} = \delta_{W}^n x_{W} \in \Hyp_{W, \tau}$.

The choice of $g_0$ furnishes us with an embedding $G_\tau/K_\tau \hookrightarrow \G(\A_f)/K$ given by $g_\tau \mapsto (g_\tau, g_0^{(\tau)})$, where $g_0^{(\tau)}$ is the image of $g_0$ in  $\U(V) \times \U(W) (\A_{F, f}^{(\tau)})$. We then have a composition
$$
\pi: \Z_{(p)}[\Hyp_\tau] = \Z_{(p)}[G_\tau/K_\tau] \hookrightarrow \Z_{(p)}[\G(\A_f)/K] \rightarrow \Z_{(p)}[\cZ_K(\G, \H)]
$$
induced by this embedding. Set $\xi_n = \pi(x_n)$.

The main properties of the cycles $\{\xi_n\}$ and the corresponding elements $x_n \in \Hyp_\tau$ are summarized in the following: 

\begin{prop}
\noindent (i) For every $n \geq 1$, $\cV_V \cV_W (x_{n+1, \tau}) = x_{n, \tau}$. 

\noindent (ii) For every $n \geq 0$, $\inv_\tau(x_n) = (n, n)$.  

\noindent (iii) For every $n \geq 1$, the cycle $\xi_n$ is defined over $L[\tau^n]$.
\end{prop}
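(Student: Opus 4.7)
The plan is to establish each of the three assertions in turn, with parts (i) and (iii) following quickly from the set-up and (ii) being the heart of the matter. Throughout, I work in the unitary building $\cB(G_\tau) = \cB(V_\tau) \times \cB(W_\tau)$ using the description from Section~\ref{subsec:buildings}.

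For part (i), the idea is that the sequence $\{x_{V, n}\}_{n \geq 0}$ traces out a half-apartment in $\cA_V'$ emanating from the origin $x_V = \Lambda_V$. Concretely, the points $x_{V, n} = \delta_V^{-n} u x_V$ are parametrized linearly along $\cA_V'$ and move monotonically away from the apex $x_V$; in particular, consecutive $x_{V, n}$ are adjacent in $\cB(V_\tau)$, with $\dist(x_{V, n}, x_V) = n$. Since for $n \geq 1$ we have $x_{V, n+1} \neq x_V$, the uniqueness clause in the definition of $\mathcal{V}_V$ (Section~\ref{subsec:partial_hecke}) forces $\mathcal{V}_V(x_{V, n+1}) = x_{V, n}$. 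The identical argument in $\cB(W_\tau)$ applied to $x_{W, n} = \delta_W^n x_W$, which lies on a ray of the standard apartment at distance $n$ from the origin, yields $\mathcal{V}_W(x_{W, n+1}) = x_{W, n}$. As $\mathcal{V}_V$ and $\mathcal{V}_W$ act on independent factors, their product gives $\mathcal{V}_V\mathcal{V}_W(x_{n+1}) = x_n$.

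For part (ii), the key input is the combinatorial description of the projection $\pr_{W_\tau}$ on special apartments from \cite[\S3]{jetchev:unitary}. By the defining property of $\cA_V'$, its intersection with $\cB(W_\tau)$ is the half-apartment $\{\delta_V^k x_V \colon k \geq 0\}$; restricting $\delta_V$ to $W_\tau = \langle e_+, e_-\rangle$ gives $\delta_W$, so this half-apartment corresponds to $\{x_{W, k}\}_{k \geq 0}$ inside $\cB(W_\tau)$. The vertex $x_{V, n} = \delta_V^{-n} u x_V$ lies on the \emph{opposite} half of $\cA_V'$ from this intersection (the one parametrized by negative powers of $\delta_V$). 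The projection formula of \cite[\S3]{jetchev:unitary} therefore identifies $\pr_{W_\tau}(x_{V, n})$ with the apex of the half-line, namely $x_V$ (viewed as an element of $\cB(W_\tau)$ via the natural embedding). Consequently $\dist(x_{V, n}, \pr_{W_\tau}(x_{V, n})) = n$ (computed along $\cA_V'$) and $\dist(x_{W, n}, \pr_{W_\tau}(x_{V, n})) = \dist(\delta_W^n x_W, x_W) = n$, yielding $\inv_\tau(x_n) = (n, n)$.

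For part (iii), the plan is to combine part (ii) with Shimura reciprocity. By (ii) and the local conductor formula recalled in Section~\ref{subsec:galois}, one has $c_\tau(x_n) = q^{\min\{n, 2n\}} = q^n$, and so $\det \Stab_{H_\tau}(x_n) = \mathbb{U}^1(n) = \nu_\tau(\cO_n^\times)$. Via the Artin map, this cuts out the ring class extension $E[\tau^n]/E$. Because $\xi_n$ is constructed from $\xi_0$ by altering only the local component at $\tau$, and $\xi_0$ is defined over $L$ with $\tau$ split completely in $L/E[1]$ (by Assumption~\ref{assumptions}A), the reciprocity law (\ref{eq:shimurarec}) identifies the field of definition of $\xi_n$ with the compositum $L \cdot E[\tau^n] = L[\tau^n]$. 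The main obstacle in this program is the projection calculation in (ii); the other parts are either a direct unwinding of definitions (i) or a formal consequence of class field theory (iii).
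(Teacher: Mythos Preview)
Your proof is correct and follows essentially the same approach as the paper. Both arguments hinge on the same geometric facts: the points $x_{V,n}$ lie along a half of the special apartment $\cA_V'$ at distance $n$ from the apex $x_V$, the projection $\pr_{W_\tau}(x_{V,n})$ is the apex, and (iii) then follows from the local conductor formula; you supply somewhat more detail for (iii) than the paper's one-line ``follows immediately from (ii),'' but the content is the same.
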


\begin{proof}
$(i)$ By definition, $\cV_V (x_{V, n+1}, x_{W, n+1}) = (x', x_{W, n+1})$ where $x' \in \Hyp_{V, \tau}$ is the unique hyperspecial 
point of $\cB(V_\tau)$ such that $\dist(x', x_{V, n+1}) = 1$ and also $\dist(x', x_V) < \dist(x_{V, n+1}, x_V)$. 

Note that on the half-apartment of $\cA_V'$ that is outside of $\cB(W_\tau)$, the operator $\delta_V$ coincides with the operator $\cV_V$ and in the complementary half-apartment (namely, $\cA_{V}' \cap \cB(W_\tau)$), $\delta_W^{-1}$ coincides with $\cV_W$. 
Thus, 
$$
\cV_V x_{V, n+1} = \delta_V \delta_V^{-n-1} u x_{V} = \delta_V^{-n} u x_{V} = x_{V, n}.
$$ 
Similarly, one checks that $\cV_W x_{W, n+1} = x_{W, n}$ and hence that $\cV_V \cV_W(x_{n+1, \tau}) = x_{n, \tau}$. 
To prove $(ii)$, note that $\pr_{W_\tau}(x_{V, n}) = x_V$ for all $n \geq 0$ and hence, $\dist(x_{V, n}, x_V) = n$ and $\dist(x_{W, n}, x_W) = n$. Finally, $(iii)$ follows immediately from $(ii)$. 
\end{proof}

\begin{rem}
Alternatively, rather than choosing $\cA_V'$ and finding a unipotent matrix $u$, one could choose a unitary, unipotent matrix $u'$ and work with the apartment determined by $u'$. For instance, the choice 
$$
 \ds u' = {\mthree {1} {-2} {-2} {0} {1} {2} {0} {0} {1} } \in G_{V, \tau}, 
$$
gives a Witt basis whose associated apartment satisfies the correct properties. 
\end{rem}

Consider the compact open subgroups $H_n = \text{Stab}_{H_\tau}(x_n) \subset H_\tau$. 
\begin{lem}
One has $H_{n+1} \subset H_n$.
\end{lem}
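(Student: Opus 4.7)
The plan is to give an explicit matrix description of each stabilizer $H_n$ as a congruence subgroup of $H_\tau \simeq \U(W)(F_\tau)$, from which the nested inclusion $H_{n+1} \subset H_n$ becomes transparent by comparing integrality conditions.

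First I would observe that the unipotent matrix $u$ has entries in $\cO_{E,\tau}$ (since $\beta, \gamma \in \cO_{E,\tau}^\times$) and is unitary, so $u \in K_{V,\tau}$ and $u\Lambda_V = \Lambda_V$. Hence the hyperspecial vertices $x_{V,n}$ and $x_{W,n}$ are represented by the lattices $\delta_V^{-n}\Lambda_V$ and $\delta_W^n \Lambda_W$, respectively. Writing an arbitrary $h \in H_\tau$ as a matrix $\begin{pmatrix}a & b \\ c & d \end{pmatrix}$ in the Witt basis $\{e_+, e_-\}$ of $W_\tau$ (with induced action on $V_\tau$ fixing $e_0$), the next step is to translate the two stabilization conditions $h \cdot x_{V,n} = x_{V,n}$ and $h \cdot x_{W,n} = x_{W,n}$ into divisibility constraints on the entries $a, b, c, d$. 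A short computation applying $h$ to each of the specified generators of the two lattices yields, respectively,
\[
a, d \in \cO_{E,\tau},\quad c \in \varpi^{2n}\cO_{E,\tau},\quad b \in \varpi^{-2n}\cO_{E,\tau}
\]
and
\[
a, d \in \cO_{E,\tau},\quad b \in \varpi^{2n}\cO_{E,\tau},\quad c \in \varpi^{-2n}\cO_{E,\tau}.
\]
Intersecting these two sets of conditions produces
\[
H_n = \left\{ h = \begin{pmatrix}a & b \\ c & d \end{pmatrix} \in \U(W)(F_\tau) \;:\; a, d \in \cO_{E,\tau},\; b, c \in \varpi^{2n}\cO_{E,\tau} \right\},
\]
from which $H_{n+1} \subset H_n$ is immediate.

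I do not anticipate any substantive obstacle; the argument is routine bookkeeping of integrality conditions. The one conceptually interesting feature is that neither of the individual stabilizers of $x_{V,n}$ or $x_{W,n}$ in $H_\tau$ is monotone in $n$: the $V$-condition forces $c$ to become small while allowing $b$ to grow, and the $W$-condition swaps the two roles. The shrinkage of $H_n$ therefore relies essentially on combining both constraints. A more intrinsic approach using the identity $\cV_V\cV_W(x_{n+1}) = x_n$ from part (i) of the previous proposition is tempting, but the operators $\cV_V, \cV_W$ depend on the choice of origin in each building, which $H_\tau$ does not in general preserve, so any such approach appears to require a computation of essentially the form above.
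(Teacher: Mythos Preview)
Your argument rests on the claim that $x_{V,n}$ corresponds to the lattice $\delta_V^{-n}\Lambda_V$, and this is where it fails. While it is true that $u\Lambda_V = \Lambda_V$, the lattice $\delta_V^{-n}\Lambda_V = \langle\varpi^{-n}e_+,\varpi^n e_-\rangle \oplus \cO_{E,\tau}e_0$ lies in the image of $\cB(W_\tau)$ inside $\cB(V_\tau)$, so it sits at distance $0$ from its own projection under $\pr_{W_\tau}$. This is incompatible with part (ii) of the preceding Proposition, which asserts $\inv_\tau(x_n) = (n,n)$; the construction is meant to place $x_{V,n}$ on the half of the special apartment $\cA_V'$ lying \emph{outside} $\cB(W_\tau)$ (see Figure~\ref{fig:apts}), and the corresponding lattice genuinely involves $u$ and cannot be simplified away. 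A concrete symptom: your $H_n$ contains the full diagonal torus $\{\diag(a,\bar a^{-1}) : a \in \cO_{E,\tau}^\times\}$ for every $n$, so $\det H_n = \mathbb{U}^1(0)$ independently of $n$, whereas the conductor computation of Section~\ref{subsec:galois} forces $\det H_n = \mathbb{U}^1(n)$. A corrected matrix calculation would require conjugating $h$ by $u$ as well as by powers of $\delta_V$, producing congruence conditions on $a$ and $d$ modulo $\varpi^n$ in addition to those on $b$ and $c$; the bookkeeping is no longer as transparent as you suggest.

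The paper's proof bypasses all of this with a coordinate-free argument. Any $h \in H_{n+1}$ fixes $x_{V,n+1}$, and since $h \in H_\tau$ preserves the subtree $\cB(W_\tau)$ it also fixes $\pr_{W_\tau}(x_{V,n+1})$; as $h$ fixes $x_{W,n+1}$ as well, it fixes the entire geodesic in $\cB(V_\tau)$ joining $x_{V,n+1}$ to $x_{W,n+1}$. That geodesic passes through every $x_{V,k}$ and $x_{W,k}$ for $k \leq n+1$, so $h \in H_n$. This is precisely the ``intrinsic approach'' you set aside; the point is to use the geodesic directly rather than the origin-dependent operators $\cV_V, \cV_W$.
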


\begin{proof}
Note that every element of $ H_{n+1}$ stabilizes the pairs 
$(x_{V,n+1},{\pr_{W_\tau}(x_{V,n+1})})$ and 
$(\pr_{W_\tau}(x_{V,n+1}),x_{W,n+1})$, and thus fixes (pointwise) the geodesic segment connecting the points 
$x_{V,n+1}$ and $x_{W,n+1}$. In particular, it stabilizes all of the pairs 
$(x_{V, k}, x_{W, k})$ for $k \leq n+1$, which implies the claim. 
\end{proof}

\begin{lem}
For $n \geq 1$, the group $H_n$ acts transitively on the product set $S_{n} = S_{V, n} \times S_{W, n}$ where $S_{V, n}$ (resp., $S_{W, n}$) is the set of hyperspecial points in the support of $\cU_V x_{V, n}$ (resp., $\cU_W x_{W, n}$). 
\end{lem}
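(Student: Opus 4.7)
The plan is to apply orbit–stabilizer to the orbit of $x_{n+1}$ under $H_n$. The previous lemma gives $\Stab_{H_n}(x_{n+1}) = H_{n+1}$, so the orbit has size $[H_n : H_{n+1}]$; it therefore suffices to verify that the orbit lies in $S_n$ and that $[H_n : H_{n+1}] = |S_n| = q^4 \cdot q^2 = q^6$. For the containment, I would first argue that $H_n$ fixes all four of $x_V, x_W, x_{V,n}, x_{W,n}$: stabilization of $x_W$ follows from the $G_{W,\tau}$-equivariance of $\pr_{W_\tau}$ applied to $\pr_{W_\tau}(x_{V,n}) = x_W$, giving $H_n \subset K_{W,\tau}$, and since $K_{W,\tau}$ acts trivially on $D_\tau$ it preserves the self-dual lattice $\Lambda_V = \Lambda_W \oplus \cO_{E,\tau} e_0$, so $H_n$ also fixes $x_V$. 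Since $\cB(V_\tau)$ and $\cB(W_\tau)$ are trees and $H_n$ acts by isometries, it permutes the neighbors of $x_{V,n}$ (resp.\ $x_{W,n}$) on the far side from $x_V$ (resp.\ $x_W$), which are exactly $S_{V,n}$ and $S_{W,n}$.

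For the index equality, I would factor $[H_n : H_{n+1}]$ through the intermediate group $H_n \cap \Stab_{H_\tau}(x_{V,n+1})$, reducing to two transitivity statements: (a) $H_n$ acts transitively on $S_{V,n}$, and (b) the intermediate subgroup acts transitively on $S_{W,n}$. Statement (b) is the rank-one case for $\U(W)(F_\tau)$: the stabilizer in $K_{W,\tau}$ of the segment from $x_W$ to $x_{W,n}$ contains a unipotent piece of order $q^2$ acting simply transitively on the $q^2$ black grandchildren of $x_{W,n}$ away from $x_W$, and the further condition of fixing $x_{V,n+1}$ is compatible with this action because $x_{V,n+1}$ lies along the special apartment $\cA_V'$.

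The main obstacle is (a), since the diagonal embedding makes $H_n$ relatively small inside $\Stab_{G_{V,\tau}}(x_{V,n})$. My approach is to parametrize $S_{V,n}$ by pairs consisting of an isotropic line in the reduction of the almost self-dual lattice sitting immediately above $x_{V,n}$ together with a coset in a residue $\mathbb{F}_q$-space labeling the subsequent black vertex, giving $q^3 \cdot q = q^4$ elements in total. I would then produce enough unipotent elements of $\U(W)(F_\tau) \hookrightarrow \U(V)(F_\tau)$ lying in $H_n$ by an explicit matrix computation. The critical input is that the unitary unipotent $u$ from Section~\ref{subsec:cycles-def} has $\beta, \gamma \in \cO_{E, \tau}^\times$: this unit-ness is exactly what guarantees that the image of the unipotent part of $H_n$ in the reductive quotient at $x_{V,n}$ is large enough to realize the full $S_{V,n}$-orbit.
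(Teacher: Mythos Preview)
Your approach is genuinely different from the paper's and, as written, has a real gap at step~(a).

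\textbf{What the paper does.} The proof in the paper is pure apartment geometry and takes three lines. Given any $(x_{V,n+1}', x_{W,n+1}') \in S_n$, one invokes \cite[Lem.~3.1]{jetchev:unitary} to place all four points $x_{V,n}, x_{W,n}, x_{V,n+1}', x_{W,n+1}'$ on a single special apartment $\cA_V''$ of $\cB(V_\tau)$, and then \cite[Lem.~3.5]{jetchev:unitary} to find $h \in H_\tau$ carrying $\cA_V''$ to the special apartment $\cA_V'$ used to define the $x_n$. Since any such $h$ fixes the geodesic segment from $x_{V,n}$ to $x_{W,n}$ pointwise, it lies in $H_n$, and one checks it sends $(x_{V,n+1}', x_{W,n+1}')$ to $(x_{V,n+1}, x_{W,n+1})$. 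In particular, the index formula $[H_n:H_{n+1}]=q^6$ (Corollary~\ref{cor:cardinality}) is \emph{deduced from} this lemma, not used to prove it.

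\textbf{Where your argument is incomplete.} Your orbit--stabilizer wrapper is logically fine but redundant: statements (a) and (b) already give transitivity on $S_{V,n}\times S_{W,n}$ directly, without passing through the index. More importantly, your plan for (a) is only a gesture. You propose to ``produce enough unipotent elements of $\U(W)(F_\tau)$'' to act transitively on a set of size $q^4$, but the unipotent radical of a Borel of $\U(2)$ is one-dimensional over $F_\tau$; unipotents alone cannot supply an orbit of this size at a single congruence step, so you will need torus contributions and a genuine computation of how $H_n$ acts on the link of $x_{V,n}$ in $\cB(V_\tau)$. That computation (conjugating the embedded $h$ by $u^{-1}\delta_V^n$ and reading off integrality conditions) is feasible, and the unit hypothesis on $\beta,\gamma$ does enter exactly where you say, but you have not done it, and it is not a one-line check. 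Similarly, in (b) the phrase ``compatible with this action because $x_{V,n+1}$ lies along the special apartment'' is not an argument; you would need to exhibit the $q^2$ elements and verify explicitly that they fix $x_{V,n+1}$.

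In short: your route can be made to work with an honest matrix computation, but the paper's apartment argument bypasses all of it by quoting two ready-made lemmas from \cite{jetchev:unitary}.
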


\begin{proof}
Let $(x_{V, n+1}', x_{W, n+1}') \in S_n$ be any vertex. Pick a special apartment $\cA_{V}'$ of $\cB(V_\tau)$ containing 
the points $x_{V, n}, x_{W, n}, x_{V, n+1}', x_{W, n+1}'$. Such a special apartment exists by \cite[Lem.3.1]{jetchev:unitary}.
Let $\cA_{V}$ be the special apartment determined by the unipotent matrix $u \in G_{V, \tau}$ used in the definition of the $x_n$'s. By \cite[Lem.3.5]{jetchev:unitary}, there exists an element $h \in H_\tau$ moving $\cA_{V}'$ to $\cA_{V}$. Since this element $h$ necessarily fixes 
the segment connecting $x_{V, n}$ and $x_{W, n}$, it must belong to $H_n$ which proves the claim. 
\end{proof}

In particular, as the cardinality of $S_n$ is $q^6$, we obtain:
\begin{cor}\label{cor:cardinality}
For $n \geq 1$, one has $\# H_n/H_{n+1} = q^6$. 
\end{cor}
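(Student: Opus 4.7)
My plan is to apply the orbit-stabilizer theorem to the transitive action of $H_n$ on $S_n$ supplied by the previous lemma. The key inputs are: (a) $|S_n| = q^6$, (b) the point $x_{n+1}$ lies in $S_n$, and (c) $H_{n+1}$ is precisely the stabilizer of $x_{n+1}$ in $H_n$.

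For (a), since $S_n = S_{V,n} \times S_{W,n}$ it suffices to show that $|S_{V,n}| = q^4$ and $|S_{W,n}| = q^2$. Because $n \geq 1$, the point $x_{V,n}$ is not the origin, so $\cU_V x_{V,n} = \sum_{L' \in S_{V,n}} L'$ by definition. Applying $\cV_V$ to both sides and invoking the identity $\cV_V \cU_V = q^4$ from Lemma \ref{lem:combinatorics} yields $q^4 \cdot x_{V,n}$ on one side; on the other side, for each $L' \in S_{V,n}$, the point $x_{V,n}$ is the unique hyperspecial neighbor of $L'$ that is closer to $\Lambda_V$, so $\cV_V L' = x_{V,n}$ and the right-hand side equals $|S_{V,n}| \cdot x_{V,n}$. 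Comparing gives $|S_{V,n}| = q^4$, and the same argument using $\cV_W \cU_W = q^2$ gives $|S_{W,n}| = q^2$.

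For (b), by construction $x_{V,n+1} = \delta_V^{-(n+1)} u\, x_V$ is a hyperspecial neighbor of $x_{V,n} = \delta_V^{-n} u\, x_V$ at distance $n+1$ from $\Lambda_V$ (using part (ii) of the previous proposition), so $x_{V,n+1} \in S_{V,n}$; the analogous statement for $W$ follows identically. Thus $x_{n+1} \in S_n$. For (c), since the earlier lemma gives $H_{n+1} \subset H_n$, and $H_{n+1} = \Stab_{H_\tau}(x_{n+1})$ by definition, we have $H_{n+1} = H_n \cap \Stab_{H_\tau}(x_{n+1}) = \Stab_{H_n}(x_{n+1})$. The orbit-stabilizer theorem, combined with transitivity of the $H_n$-action on $S_n$, then gives $\#(H_n/H_{n+1}) = |S_n| = q^6$, as desired.

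There is no real obstacle here: once the counting identity $|S_n| = q^6$ is extracted from Lemma \ref{lem:combinatorics} and the transitivity of the previous lemma is in hand, the corollary is a direct application of orbit-stabilizer.
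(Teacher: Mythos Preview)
Your proof is correct and follows exactly the route the paper intends: the paper simply records ``as the cardinality of $S_n$ is $q^6$'' and deduces the corollary, and you have filled in the orbit--stabilizer details (including the identification $H_{n+1} = \Stab_{H_n}(x_{n+1})$) that the paper leaves implicit. Your use of the identity $\cV_V\cU_V = q^4$ to extract $|S_{V,n}| = q^4$ is a clean way to recover the direct count of successors of a non-origin hyperspecial vertex.
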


\subsection{The distribution relation}
In this section, a sum indexed by the quotient $H_n/H_{n+1}$ always means a sum over some fixed choice of coset representatives. We begin by computing traces:
\begin{lem}\label{lem:trace} For $n \geq 1$, one has
$$
\Tr_{L[\tau^{n+1}]/L[\tau^{n}]} \xi_{n+1} = \frac{1}{q^5} \pi \left(\sum_{h \in H_{n}/H_{n+1}} h \cdot x_{n+1}\right).
$$
\end{lem}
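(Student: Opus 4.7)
The plan is to unravel the Galois trace on the left using Shimura reciprocity and compare it to the right-hand sum via an intermediary map from $H_n/H_{n+1}$ to the Galois orbit of $\xi_{n+1}$. The multiplicity $q^5$ will emerge as the kernel size of this map.

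First, I would define a map
\[
\Phi \colon H_n/H_{n+1} \;\longrightarrow\; \cZ_K(\G,\H), \qquad \Phi([h]) := \pi(h \cdot x_{n+1}),
\]
which is well-defined because $H_{n+1}$ stabilizes $x_{n+1}$. By Shimura reciprocity~(\ref{eq:shimurarec}), each $\pi(h \cdot x_{n+1})$ equals $\sigma_h(\xi_{n+1})$ for a Galois element $\sigma_h = \Art_E(s)$ attached to any idele $s$ with $\bar s/s = \nu_\tau(h)$ supported at $\tau$; since $h$ fixes $x_n$, the same $\sigma_h$ fixes $\xi_n = \pi(x_n)$, so $\sigma_h$ lies in $\Gal(L[\tau^{n+1}]/L[\tau^n])$.

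Next I would identify the image of $\Phi$. By the description recalled in Section~\ref{subsec:galois}, $\det H_n = \U^1(n)$, and local class field theory together with the Artin map identifies $\U^1(n)/\U^1(n+1)$ with the local piece of $\Gal(L[\tau^{n+1}]/L[\tau^n])$. Using that $\tau$ is totally ramified in $E[\tau^{n+1}]/E[1]$ at an inert prime and that $\tau$ splits completely in $L/E[1]$ (Assumption~A), one has $[L[\tau^{n+1}]:L[\tau^n]] = q$; combined with the surjectivity of $\det$ on $H_n$ onto $\U^1(n)$, this shows $\Phi$ surjects onto the full $\Gal(L[\tau^{n+1}]/L[\tau^n])$-orbit of $\xi_{n+1}$. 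The map $\Phi$ is moreover $H_n$-equivariant, where $H_n$ acts on the target via $h \mapsto \sigma_h$ (using that $\Gal(E^{\ab}/E)$ is abelian); since both actions are transitive, all fibers of $\Phi$ have the same cardinality. By Corollary~\ref{cor:cardinality} the domain has size $q^6$, so each fiber has size $q^5$.

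Putting it together, one gets
\[
\sum_{h \in H_n/H_{n+1}} \pi(h \cdot x_{n+1}) \;=\; q^5 \!\!\sum_{\sigma \in \Gal(L[\tau^{n+1}]/L[\tau^n])}\!\! \sigma(\xi_{n+1}) \;=\; q^5 \,\Tr_{L[\tau^{n+1}]/L[\tau^n]} \xi_{n+1},
\]
and dividing by $q^5$ yields the lemma. The main obstacle will be the middle step: justifying rigorously that $\Phi$ surjects onto exactly $\Gal(L[\tau^{n+1}]/L[\tau^n])\cdot\xi_{n+1}$ and neither a smaller nor a larger subset. This requires combining the determinant calculation of Section~\ref{subsec:galois} with a careful bookkeeping of the global stabilizer $\N_{\G(\Q)}(\H(\Q))$, and checking that Assumption~A ($\tau$ splits completely in $L/E[1]$) eliminates contributions to the Galois group coming from primes other than $\tau$.
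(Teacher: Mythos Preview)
Your proposal is correct and follows essentially the same route as the paper's proof: both arguments push the $H_n/H_{n+1}$-sum through Shimura reciprocity to land in the Galois orbit of $\xi_{n+1}$, use Assumption~A to reduce the Galois-group bookkeeping to the local situation at $\tau$, and then show the resulting map onto $\Gal(L[\tau^{n+1}]/L[\tau^n])\cdot\xi_{n+1}$ has constant fiber size $q^5 = q^6/q$ via Corollary~\ref{cor:cardinality}. Your phrasing in terms of an $H_n$-equivariant map between two transitive $H_n$-sets is exactly equivalent to the paper's argument that the multiplicity $m(\lambda)$ is independent of $\lambda$ (which the paper proves by applying an arbitrary $\sigma$ to the sum and observing invariance).
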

\begin{proof}
As $\tau$ is totally ramified in this extension, one may work with the local Galois group $\Gal(L_\tau[\tau^{n+1}]/L_\tau[\tau^n])$ but under Assumption 1.1A, this identifies with $\Gal(E_\tau[\tau^{n+1}]/E_\tau[\tau^n])$,  so we may assume $L = E[1]$ for the purpose of this proof.

For each $h \in H_n$, one knows that the cycle $\pi(h(x_n))$ differs from $\pi(x_n)$ by $\text{Art}_{E_\tau} \lambda$ for some $\lambda \in \mathcal{O}_n^\times$ with $\overline{\lambda}/{\lambda} = \det h$. If $h$ is replaced by an $H_{n+1}$-multiple, then $\lambda$ is replaced by an $\mathcal{O}_{n+1}^\times$-multiple. It follows that one has

$$
\pi\left(\sum_{h \in H_n/H_{n+1}} h \cdot x_{n+1}\right) = \sum_{\lambda \in \cO^\times_n / \cO^\times_{n+1}} m(\lambda) \Art_{E_\tau}(\lambda)(\xi_{n+1}),
$$ 
where $m(\lambda)$ is an unknown natural number that, a priori, depends on $n$ and $\lambda$; namely, the number of classes $h$ in $H_n/H_{n+1}$ such that $\overline{\lambda}/\lambda = \det h$. We now show that in fact $m(\lambda) = q^5$. For any $\gamma \in \mathcal{O}_n^\times \setminus \cO_{n+1}^\times$, write $\sigma = \Art_{E_\tau}^{-1}(\gamma) \in \Gal(E[\tau^{n+1}]/E[\tau^n])$, and choose $h_\gamma \in H_n$ with $\det h_\gamma = \overline{\gamma}/\gamma$. Then

\begin{align*}
\sigma \left(\sum_{\lambda \in \mathcal{O}_n^\times/\mathcal{O}_{n+1}^\times}m(\lambda){\Art_{E_\tau}(\lambda)}(\xi_{n+1})\right) &=\pi \left(\sum_{ h\in H_n/H_{n+1}}h_\gamma h \cdot x_{n+1}\right) = \\
&=\pi \left(\sum_{ h\in H_n/H_{n+1}} h \cdot x_{n+1}\right) =\\
&=\sum_{\lambda \in \mathcal{O}_n^\times/\mathcal{O}_{n+1}^\times}m(\lambda){\Art_{E_\tau}(\lambda)}(\xi_{n+1}).
\end{align*}
and so $m(\lambda) = m(\gamma\lambda)$, and, as $\gamma$ was arbitrary, this quantity does not depend on $\lambda$; we may thus unambiguously denote it by $m$. The map (of sets) $H_n/H_{n+1} \to \cO^\times_n / \cO^\times_{n+1}$ which takes $h$ to $\lambda$ such that $\det h = \overline{\lambda}/\lambda$ is then $m$-to-$1$. By Corollary \ref{cor:cardinality}, one has $m = q^6/q = q^5$ as claimed.

It follows that
\begin{align*}
\Tr_{n+1,n}(\xi_{n+1})&=\sum_{\sigma \in \Gal(E[\tau^{n+1}]/E[\tau^n])}\sigma(\xi_{n+1}) = \\
&= \sum_{\lambda\in \mathcal{O}_n^\times/\mathcal{O}_{n+1}^\times}{\text{Art}_{E_\tau}(\lambda)} (\xi_{n+1}) = \\
&=\frac{1}{q^5}\pi \left(\sum_{h\in H_n/H_{n+1}}h \cdot x_{n+1}\right).
\end{align*}
\end{proof}

We will also need a commutativity of the $H_\tau$-action on the building with the ``partial Hecke operators.'' (The genuine Hecke algebra obviously commutes with the $H_\tau$-action on the building, as the Hecke algebra is generated by adjacency operators and $H_\tau$ acts via isometries.)
\begin{lem}\label{lem:commute}
If $h \in H_1$, then $h\cV_V   = \cV_V h$ and  $h\cV_W= \cV_W h$ in $R$.
\end{lem}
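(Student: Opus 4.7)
The plan is to reduce commutativity of $h$ with $\cV_V$ (resp.\ $\cV_W$) to the fact that $h$ fixes the origin vertex $x_V$ (resp.\ $x_W$) and then exploit isometry-invariance of the definitions. Since $h \in H_\tau$ is diagonally embedded and acts on $\cB(V_\tau) \times \cB(W_\tau)$ factor-by-factor, as an element of $R = R_V \otimes R_W$ it is a pure tensor, while $\cV_V = \cV_V \otimes 1$ and $\cV_W = 1 \otimes \cV_W$. Hence commutativity reduces to checking $h \cdot \cV_V = \cV_V \cdot h$ on $\Z_{(p)}[\Hyp_{V,\tau}]$ alone, and analogously for the $W$-factor.

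The key preliminary step is to show every $h \in H_1$ fixes both $x_V$ and $x_W$. Since $\pr_{W_\tau}$ is $H_\tau$-equivariant and part (ii) of the preceding proposition gives $\pr_{W_\tau}(x_{V,n}) = x_V$ for all $n$, any $h$ stabilizing $x_{V,1}$ also stabilizes $x_V$. Alternatively, the geodesic-stabilization argument used to prove $H_{n+1} \subset H_n$, applied with $n=0$, shows that $H_1$ fixes pointwise the geodesic segment in the product building connecting $x_1$ with $(x_V, x_W)$. Either approach yields $h \cdot x_V = x_V$ and $h \cdot x_W = x_W$.

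Given this, commutativity away from the origin is immediate from the defining property of $\cV_V$. For $y \neq x_V$, $\cV_V(y)$ is characterized as the unique hyperspecial neighbor of $y$ one step closer to $x_V$; since $h$ acts by bicolored-graph automorphisms preserving adjacency and, because $h \cdot x_V = x_V$, also preserves distance to $x_V$, the vertex $h \cdot \cV_V(y)$ is characterized in exactly the same way relative to $h y$, so it equals $\cV_V(h y)$. At $y = x_V$, the formula
$$
\cV_V(x_V) = (1 - q^3)\, x_V + \tfrac{1}{q+1}\, S_1
$$
reduces the claim to $h \cdot S_1 = S_1$, where $S_1$ is the formal sum of hyperspecial neighbors of $x_V$; but an isometry fixing $x_V$ permutes these neighbors, so this is automatic. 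Replacing $V$ by $W$ throughout, using $h \cdot x_W = x_W$ and the analogous formula with $q$ in place of $q^3$, yields $h\, \cV_W = \cV_W\, h$ in $R_W$.

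The main (and essentially only) obstacle is the origin-fixation step; after that, the argument is formal, resting solely on the fact that $H_\tau$ acts on each building by simplicial isometries and on the explicit piecewise definitions of $\cV_V$ and $\cV_W$. Note that the proof in fact goes through for any $h \in H_0 = \Stab_{H_\tau}(x_0)$, but the authors only need the weaker statement for $h \in H_1 \subset H_0$.
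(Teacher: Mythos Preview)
Your proof is correct and follows essentially the same approach as the paper: establish that $h$ fixes the origin, then use the isometry characterization of $\cV_V$ away from the origin and the explicit formula at the origin. Your treatment is in fact slightly cleaner than the paper's---you correctly split into just the two cases $y = x_V$ and $y \neq x_V$, whereas the paper also singles out neighbors of the origin (asserting $h$ acts trivially on them, which is not obviously true for arbitrary $h \in H_1$ and is in any case unnecessary, since the generic isometry argument already covers distance-$1$ points).
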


\begin{proof}
For the first statement, suffices to show that the operators $h\cV_V$ and $\cV_V h$ agree on an arbitrary vertex $y_V \in \Hyp_V$. If $y_V$ is the origin, or a neighbor of the origin, then $h$ acts trivially on both $y_V$ and $\cV y_V$, so the result is clear.
Away from the origin, it follows from the definition of $\cV_V$: $h \cV_V y_V$ is a neighbor of $h y_V$, and $\dist(h\cV_Vy_V, \Lambda_V) = \dist(h\cV_Vy_V, h\Lambda_V) = \dist(\cV_vy_V, \Lambda_V) < \dist(y_V, \Lambda_V) = \dist(hy_V, h\Lambda_V) = \dist(y_V, \Lambda_V)$. The proof for $\cV_W$ and $h$ is the same.
\end{proof}

Lemmas \ref{lem:trace} and \ref{lem:commute} yield the main Theorem \ref{thm:main}. Indeed, for $n$ sufficiently large, one has:
\begin{align*}
\Tr_{L[\tau^{n+6}]/L[\tau^{n+5}]} \left (\sum_{i = 0}^6 C_i \xi_{n+6 - i} \right ) &= \frac{1}{q^5} \pi \left( \sum_{h \in H_{n+5}/H_{n+6}} h \sum_{i=0}^6 C_i x_{n+6-i}\right)\\
&= \frac{1}{q^5} \pi\left( \sum_{i=0}^6 C_i \sum_{h \in H_{n+5}/H_{n+6}} h \cdot (\cV_V\cV_W)^i x_{n+6}\right)\\
&= \frac{1}{q^5} \pi\left(\sum_{i=0}^6 C_i (\cV_V\cV_W)^i \sum_{h \in H_{n+5}/H_{n+6}} h x_{n+6}\right)
\end{align*}
As $\ds \sum_{h \in H_{n+5}/H_{n+6}} h x_{n+6}$ is balanced, the ring $R$ acts on it via the quotient $\mathcal{I}$, and this last sum is zero by Lemma \ref{lem:vanish}.

\begin{rem}\label{shorter_relation}
The same proof gives a shorter distribution relation, using the coefficients of the factor $H^{(4)}(z)$ of the Hecke polynomial $H_\tau(z)$ from Theorem~\ref{thm:heckepol}. Indeed, all that is used in the proof above is that $H_\tau(\cV_V \cV_W)$ acts as 0 on the subgroup of balanced elements of $\Z_{(p)}[\Hyp_\tau]$, and this is equally true with $H_\tau$ replaced by $H^{(4)}$, the key point being that the coefficients of $H^{(4)}$ are genuine Hecke operators and not just elements of $R$. 
\end{rem}

\section{Norm-Compatible Families}\label{sec:applications}

Let $\pi_\tau$ be a smooth admissible representation of the local group $G_\tau$ on a complex vector space such that $\dim \pi_\tau^{K_\tau} = 1$, so that the operators $C_0, C_1, \dots, C_6 \in \mathcal{H}_\tau$ act on $\pi_\tau^{K_\tau}$ by scalars, which we denote by $c_0, c_1, \dots, c_6 \in \C$, respectively; we assume that $\pi$ is \emph{algebraic} in the sense that the $c_i$ are each algebraic integers. (One expects such $\pi_\tau$ to arise from cohomological representations of the global group $\G$, but the local representation is all that is needed to build norm-compatible sequences).

For a sufficiently large number field $M$, which we may assume contains $L$, one thus has a specialization 
$$
H_\tau(z; \pi_\tau) = c_0 z^6 + \dots + c_6 \in \cO_M[z]. 
$$ 
of the Hecke polynomial $H_\tau(z)$. Let $\beta$ be a root of $H_\tau(z; \pi_\tau)$, let $\cO_\tau$ be the completion of $\cO_M$ at the place above $\tau$, and enlarge $\cO_\tau$ if needed until $\beta \in \cO_\tau$. Then $\beta^{-1}$ is a root of the polynomial $c_6 z^6 + \dots + c_0 = z^6 H_\tau(z^{-1}; \pi_\tau)$. Write  
$$
c_6z^6 + \dots + c_0 = (b_5 z^5 + \dots+ b_0)(z - \beta^{-1}) \text{ in } \cO_\tau[z]. 
$$
For $n \geq 5$, define $\widetilde y_{n} = b_5 \xi_{n} + b_4 \xi_{n-1} + \dots + b_0 \xi_{n-5} \in \cO_\tau[\cZ_K(\G, \Hbf)]$. The distribution relations imply the following:

\begin{lem}
For $n \geq 5$, one has
\begin{equation}
\Tr_{n+1, n} (\widetilde y_{n+1}) = q \beta^{-1} \widetilde y_n \in \cO_\tau[\cZ_K(\G, \Hbf)]. 
\end{equation} 
\end{lem}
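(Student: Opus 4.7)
The plan is to combine the distribution relation of Theorem \ref{thm:main} with the polynomial identity
$$c_6 z^6 + c_5 z^5 + \cdots + c_0 = (b_5 z^5 + b_4 z^4 + \cdots + b_0)(z - \beta^{-1}),$$
which unpacks to $c_6 = b_5$, $c_i = b_{i-1} - \beta^{-1} b_i$ for $1 \leq i \leq 5$, and $c_0 = -\beta^{-1} b_0$. The coefficients $b_i$ have been engineered precisely so that using the six-term recursion to eliminate $\Tr_{n+1,n}(\xi_{n+1})$ from $\Tr_{n+1,n}(\widetilde{y}_{n+1})$ causes the result to telescope to $q\beta^{-1}\widetilde{y}_n$, mimicking the classical Perrin-Riou construction for Heegner points.

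The first step is to apply the algebra map $\mathcal{H}_\tau \to \mathcal{O}_\tau$ determined by $\pi_\tau^{K_\tau}$ to Theorem \ref{thm:main} and shift indices: for $n \geq 5$ this gives
$$c_6 \Tr_{n+1,n}(\xi_{n+1}) + c_5 \Tr_{n+1,n}(\xi_n) + \cdots + c_0 \Tr_{n+1,n}(\xi_{n-5}) = 0.$$
Each of $\xi_n, \xi_{n-1}, \ldots, \xi_{n-5}$ is already defined over $L[\tau^n]$, so its trace from $L[\tau^{n+1}]$ is multiplication by $d := [L[\tau^{n+1}] : L[\tau^n]] = q$; this last equality follows from Corollary \ref{cor:cardinality} together with the counting in the proof of Lemma \ref{lem:trace}, after reducing to the case $L = E[1]$ via Assumption \ref{assumptions}A. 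Using $c_6 = b_5$, one obtains
$$b_5\, \Tr_{n+1,n}(\xi_{n+1}) = -q\bigl(c_5 \xi_n + c_4 \xi_{n-1} + c_3 \xi_{n-2} + c_2 \xi_{n-3} + c_1 \xi_{n-4} + c_0 \xi_{n-5}\bigr).$$

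Finally, substitute this into the direct expansion
$$\Tr_{n+1,n}(\widetilde{y}_{n+1}) = b_5\, \Tr_{n+1,n}(\xi_{n+1}) + q\bigl(b_4 \xi_n + b_3 \xi_{n-1} + b_2 \xi_{n-2} + b_1 \xi_{n-3} + b_0 \xi_{n-4}\bigr).$$
Collecting terms, the coefficient of $\xi_{n-i}$ for $0 \leq i \leq 4$ is $q(b_{4-i} - c_{5-i}) = q\beta^{-1} b_{5-i}$, while the coefficient of $\xi_{n-5}$ is $-qc_0 = q\beta^{-1} b_0$; assembled together, these produce exactly $q\beta^{-1}\widetilde{y}_n$. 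The proof is ultimately a purely algebraic verification, and the only mildly delicate point is bookkeeping the indices correctly and confirming the degree identity $[L[\tau^{n+1}] : L[\tau^n]] = q$ uniformly for $n \geq 5$, which I expect to be immediate from the local ring class field description of $\mathcal{O}_n^\times/\mathcal{O}_{n+1}^\times$ used throughout Section \ref{subsec:cycles-def}.
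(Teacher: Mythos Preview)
Your proof is correct and follows essentially the same approach as the paper. The paper compresses your computation into a single line by noting that the polynomial factorization gives $\widetilde y_{n+1} - \beta^{-1}\widetilde y_n = \sum_{i=0}^6 c_i\,\xi_{n+1-i}$ identically, then applying $\Tr_{n+1,n}$ and invoking Theorem~\ref{thm:main}; you unpack this with explicit coefficient matching and make the degree identity $[L[\tau^{n+1}]:L[\tau^n]]=q$ explicit, which the paper leaves implicit.
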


\begin{proof}
The lemma is an immediate consequence of the following equality 
\begin{eqnarray*}
S &=& \Tr_{n+6, n+5} \left ( b_5 \xi_{n+6} + \dots + b_0 \xi_{n+1}  - \beta^{-1} \left (b_5 \xi_{n+5} + \dots + b_0 \xi_{n} \right ) \right ) = \\ 
&=& \Tr_{n+6, n+5} \left ( c_6 \xi_{n+6} + c_5 \xi_{n+5} + \dots + c_0 \xi_n \right ) = 0. 
\end{eqnarray*} 
\end{proof}
Now, let $\alpha = q \beta^{-1}$ and define $y_n =  \alpha^{-n} \widetilde y_n$. One then has 
\begin{equation}\label{eq:normcomp}
\Tr_{n+1, n} (y _{n+1}) = \Tr_{n+1, n} \left ( \alpha^{-n-1} \widetilde y_{n+1}\right ) = \alpha^{-n - 1} \widetilde \alpha y_n = y_n,  
\end{equation}
which gives the family of norm-compatible cycles mentioned in the introduction. A priori, these cycles appear in $\Frac(\cO_\tau)[\cZ_K(\G, \Hbf)]$ where $\Frac(\cO_\tau)$ denotes the fraction field of $\cO_\tau$; if we make in addition the ``ordinarity assumption'' that $v_\tau(\alpha) = 0$, then they are in $\cO_\tau[\cZ_K(\G, \Hbf)]$.

\section*{Acknowledgements}
We thank Christophe Cornut, Julian Rosen, and Christopher Skinner for multiple helpful discussions on this project. This research was supported by the Swiss National Science Foundation.

\bibliographystyle{amsalpha}
\bibliography{biblio-math}

\end{document}